\numberwithin{equation}{section}
\newtheorem{theorem}{Theorem}[section]
\newtheorem{corollary}[theorem]{Corollary}
\newtheorem{definition}[theorem]{Definition}
\newtheorem{lemma}[theorem]{Lemma}
\newtheorem{proposition}[theorem]{Proposition}
\newtheorem{setting}[theorem]{Setting}
\newtheorem{notation}[theorem]{Notation}
\theoremstyle{nonumberplain}
\newtheorem{proof}{\textit{Proof.}}
\newcommand{\N}{\mathbb{N}}
\newcommand{\R}{\mathbb{R}}
\newcommand{\E}{\mathbb{E}}
\renewcommand{\epsilon}{\ensuremath\varepsilon}
\newcommand{\Id}{\operatorname{I}}
\newcommand{\ResReal}{\mathscr{R}}
\newcommand{\ResNet}{\mathscr{N}}
\newcommand{\ResPar}{\mathscr{P}}
\newcommand{\ResLayer}{\mathscr{L}}
\newcommand{\ResDims}{\mathscr{D}}
\newcommand{\ResParal}{\mathbb{P}}
\DeclarePairedDelimiter\abs{\lvert}{\rvert}
\DeclarePairedDelimiter\norm{\lVert}{\rVert}
\newcommand{\tightoverset}[2]{%
	\mathop{#2}\limits^{\vbox to -.5ex{\kern-0.75ex\hbox{$#1$}\vss}}}
\begin{document}

\title{Approximation properties of Residual\\ Neural
	 Networks for Kolmogorov PDEs}

\author{Jonas Baggenstos$^{1}$ and Diyora Salimova$^{2}$
	\bigskip
	\\
	\small{$^1$Department of Mathematics, ETH Zurich,}\\
	\small{Switzerland, e-mail:  jonas.baggenstos@gmx.ch}
\\
\small{$^2$Department of Information Technology and Electrical Engineering,}\\
\small{ETH Zurich, Switzerland, e-mail:  sdiyora@ethz.ch}}

\maketitle

\begin{abstract}
  In recent years residual neural  networks (ResNets) as introduced by He et al. \cite{he2016deep} have become very popular in a large number  of applications, including in image classification and segmentation. They provide a new perspective in training very deep neural networks without suffering  the vanishing gradient problem.  In this article we show that ResNets are able to approximate solutions of Kolmogorov partial differential equations (PDEs) with constant diffusion  and possibly nonlinear drift coefficients without  suffering the curse of dimensionality, which is to say the number of parameters of the approximating ResNets grows at most polynomially in the reciprocal of the approximation accuracy $\epsilon > 0$ and the dimension of the considered PDE $d\in\N$. We adapt a proof in Jentzen et al.\ \cite{jentzen2018proof} - who showed a similar result for  feedforward neural networks (FNNs) - to ResNets. In contrast to FNNs, the Euler-Maruyama approximation structure of ResNets simplifies the construction of the approximating ResNets substantially. Moreover, contrary to \cite{jentzen2018proof}, in our proof using ResNets does not require the existence of an FNN (or a ResNet) representing the identity map, which  enlarges the set of applicable activation functions. 
\end{abstract}

\tableofcontents

\section*{Notation}

\begin{notation}[Natural and real numbers]
	We denote the set of natural numbers by $\N = \{1,\allowbreak 2,\allowbreak 3,\allowbreak \dots \}$, the set of natural numbers including zero by $\N_0 =\{0,1,2,3,\dots\}$, and the set of real numbers by $\R$.
	\end{notation}

\begin{notation}[Scalar product and Euclidean norm]
	Let $n\in\N$ and $x=(x_1,x_2,\dots,x_n),\allowbreak y=(y_1,y_2,\dots,y_n)\in\R^n$. Then we denote by $\langle\cdot,\cdot\rangle \colon \mathbb{R}^{n} \times \R^n\to\R$ the standard scalar product given by
	\begin{align*}
		\langle x,y\rangle = \textstyle\sum\limits_{i=1}^n x_iy_i.
	\end{align*}
	Moreover, we denote by $\norm{x}$ the Euclidean norm of $x\in\R^n$ induced by the standard scalar product, i.e.,
	\begin{align*}
		\norm{x} = \bigg[\textstyle\sum\limits_{i=1}^n x_i^2 \bigg]^{\nicefrac{1}{2}}.
	\end{align*}
\end{notation}
\begin{notation}[Identity matrix]\label{def5.8}
	Let $n \in \mathbb{N}$. Then we denote by $\Id_n \in \mathbb{R}^{n \times n}$ the identity matrix in $\mathbb{R}^{n \times n}$. 
\end{notation}
\begin{notation}[Multidimensional versions]\label{def5.2}
	Let $f \colon \mathbb{R} \rightarrow \mathbb{R}$ be a function. Then we denote by $\mathcal{M}_{f} \colon (\cup_{n \in \mathbb{N}} \mathbb{R}^n) \rightarrow (\cup_{n \in \mathbb{N}} \mathbb{R}^n)$ the function which satisfies for all $n \in \mathbb{N}$, $x=(x_1,x_2,...,x_n)\in \mathbb{R}^n$ that
	\begin{align*}
	\mathcal{M}_{f}(x) = (f(x_1),f(x_2),...,f(x_n)).
	\end{align*}
\end{notation}

\begin{notation}[Gradient and Hessian matrix]
	Let $n\in\N$ and $f\colon\R^n\to\R$ be a  differentiable function. Then we denote the partial derivatives (also called partials) in the $i$-th variable by
	\begin{align*}
		\partial_{x_i} f, \qquad i \in \{1, 2, \ldots, n\}.
	\end{align*}
	Furthermore, we denote the gradient and, if $f$ is twice differentiable, the Hessian matrix by
	\begin{align*}
		\nabla f = \begin{pmatrix}
		\partial_{x_1} f\\
		\partial_{x_2} f\\
		\vdots\\
		\partial_{x_n} f
		\end{pmatrix} \qquad \text{and} \qquad
		\operatorname{Hess}(f) = 
		\begin{pmatrix}
			\partial_{x_1}^2f & \partial_{x_1}\partial_{x_2}f & \dots & \partial_{x_1}\partial_{x_n}f\\
			\partial_{x_2}\partial_{x_1}f & \partial_{x_2}^2 f& \dots& \partial_{x_2}\partial_{x_n} f\\
			\vdots&\vdots&\ddots&\vdots\\
			\partial_{x_n}\partial_{x_1} f& \partial_{x_n}\partial_{x_2} f & \dots & \partial_{x_n}^2 f
		\end{pmatrix}.
	\end{align*}
	\end{notation}

\begin{notation}[Set of continuous functions]
Let $m, n \in \N$. Then we denote by $C(\R^m, \R^n)$ the set of all continuous functions $f \colon \R^m \to \R^n$.
\end{notation}

\addcontentsline{toc}{section}{Notation}
\pagebreak

\section{Introduction}

 In recent years residual neural  networks (ResNets) as introduced by He et al.\ \cite{he2016deep} have become very popular in a large number of applications, especially, in image classification and segmentation. One of their main advantages is that even very deep ResNets are able to be trained without suffering the vanishing gradient problem, see, e.g., \cite{10.1007/978-3-319-66471-2_8,he2016deep,he2016identity,orhan2018skip,wu2016wider}. In this article, in \Cref{maincor} below, we show that ResNets have the capacity to approximate solutions of Kolmogorov partial differential equations (PDEs) (see, e.g., \cite{kolmogoroff1931analytischen,krylov2006stochastic}) with constant diffusion  and possibly nonlinear drift coefficients without suffering the curse of dimensionality (CoD), which is to say the number of parameters of the approximating ResNets grows at most polynomially in the reciprocal of the approximation accuracy $\epsilon > 0$ and the dimension of the considered PDE $d\in\N$. We achieve this by  adapting a proof in Jentzen et al.\ \cite{jentzen2018proof} - who showed a similar result for  feedforward neural networks (FNNs) - to ResNets.
  
  We define ResNets as a sequence of FNNs and linear projections of the skip connections, also called shortcuts (cf. \Cref{DefResNet,ReaResNet} below). In this way ResNets can be seen as Euler-Maruyama approximations if we set the linear projection equal to the identity \cite{avelin2021neural,muller2020space}. Due to the similar structure to the Euler-Maruyama approximation, ResNets perfectly suit the proof structure presented in \cite{jentzen2018proof}. Therefore, we are able to show that ResNets approximating Kolmogorov PDEs with constant diffusion  and possibly nonlinear drift coefficient overcome the CoD. We  note that there are many more theoretical results about neural networks approximating solutions of PDEs in the scientific literature, see, e.g., \cite{beck2018solving,beck2019machine,grohs2020deep,han2018solving,jentzen2018proof,weinan2018deep,hornung2020spacetime,GrohsJentzenSalimova2019}.

Our proof of the main result of this article, \Cref{mainprop} below, involves several steps. With the help of the Feynman-Kac formula we obtain suitable stochastic processes linked to   solutions of  Kolmogorov PDEs. The weak error estimates for an Euler-Maruyama type approximation as presented in \cite[Proposition 4.2]{jentzen2018proof} and  Monte Carlo type estimates as in \cite[Corollary 2.5]{grohs2018proof} then yield to  random fields with the desired approximation accuracy. Lemma 2.1 in \cite{jentzen2018proof} assures the existence of  realizations of these random fields (which are deterministic) with the same approximation accuracy. Assuming that we can approximate the drift and the initial condition function of the PDE by FNNs without suffering the CoD, we then construct  ResNets with realizations equal to the obtained approximating functions. As the last step we show the polynomial complexity bounds of the constructed ResNets.

The Euler-Maruyama type structure induced by the definition of ResNets simplifies the construction of the ResNet in our proof substantially compared to \cite{jentzen2018proof}. On one hand, a construction as in \cite[Proposition 5.2]{jentzen2018proof} to perform one step in the Euler-Maruyama approximation becomes unnecessary. On the other hand, due to our complexity measure on ResNets, plugging in the artificial identity between two neural networks is not needed anymore to obtain polynomial complexity bounds. Therefore, we do not need to assume that FNNs have the capability to describe the identity function, which, would be an additional assumption on the activation function. This, of course,  enlarges the set of applicable activation functions.

The article is structured as follows. In Section \ref{FFDNNs}, we recall definitions and results on  FNNs, closely following \cite{jentzen2018proof}. Using results from Section \ref{FFDNNs} and inspired by \cite{he2016identity,muller2020space},  in Section \ref{ResidualNetworks} we develop a theory involving different manipulations of ResNets and FNNs. In Section \ref{ResNetPDE}, we state and prove our main result, \Cref{mainprop} and its slight extension - \Cref{maincor}. Finally, in Section \ref{conc} we provide some closing comments.

\section{Feedforward neural networks (FNNs)}\label{FFDNNs}

In this section, we recall some  definitions and results on standard FNNs from the scientific literature (cf., e.g., \cite[Section 5]{jentzen2018proof}). These results on FNNs allow us to obtain similar results on ResNets presented in Section \ref{ResidualNetworks} below. For proofs of the results of this section we refer to, e.g., Grohs et al.~\cite[Section 2]{grohs2019spacetime} and Jentzen et al.~\cite[Section 5]{jentzen2018proof}.

\begin{definition}[FNNs]\label{def5.1}
	We denote by $\mathcal{N}$ the set given by
	\begin{align}
	\mathcal{N} = \cup_{L \in \N} \cup_{l_0,l_1,\dots,l_L \in \N} ( \times_{k=1}^{L}( \R^{l_k \times l_{k-1}} \times \R^{l_k}))
	\end{align}
	and we denote by $\mathcal{P}, \mathcal{L},\mathcal{I},\mathcal{O} \colon \mathcal{N} \rightarrow \N,$ $\mathcal{D} \colon \mathcal{N} \rightarrow (\cup_{L \in \N} \N^L)$, and $\mathbb{D}_n  \colon \mathcal{N} \rightarrow \N_0$, $n \in \N_0$, the functions which satisfy for all $L \in \N$, $l_0, l_1, . . . , l_L \in \N$, $\theta = ((W_1,B_1),(W_2,B_2),\dots,(W_L,B_L)) \in (\times_{k=1}^{L}( \R^{l_k \times l_{k-1}} \times \R^{l_k}))$, $n \in \N_0$ that $\mathcal{L}(\theta) =L$, $\mathcal{I}(\theta) =l_0$, $\mathcal{O}(\theta) =l_L$, and
	\begin{align*}
	\mathcal{P}(\theta) &= \textstyle\sum\limits_{k=1}^{L} l_k (l_{k-1} + 1),\\
	\mathcal{D}(\theta) &= (l_0,l_1,\dots,l_L),\\
	\mathbb{D}_n(\theta) &= \begin{cases}l_n \quad & \colon n \leq L \\ 0 & \colon n > L. \end{cases}
	\end{align*}
\end{definition}
We call $\theta\in\mathcal{N}$ an FNN and we call the number of layers $\mathcal{L}(\theta) = L$ the depth of the FNN $\theta$, and denote by $l_i$ the size of layer $i$ and this represents the number of neurons in layer $i$. Furthermore, we call $\mathcal{I}(\theta) = l_0$ the input dimension, $\mathcal{O}(\theta) = l_L$ the output dimension, and we call $\mathcal{P}(\theta)$ the complexity of the FNN $\theta \in\mathcal{N}$. One usually refers to $\mathcal{D}(\theta)$, respectively to $\{\mathbb{D}_n(\theta)\}_{n \in \N_0}$, as the architecture of the FNN $\theta \in\mathcal{N}$.

Note that there are different definitions for the complexity of an FNN, e.g., the depth, the size of the largest layer, the total number of neurons or the number of non-zero parameters, i.e., non-zero entries of all weights and biases, see, e.g., \cite{elbrachter2020dnn,gribonval2021approximation,grohs2019spacetime,jentzen2018proof,muller2020space,petersen2018optimal}. However, the number of non-zero parameters and the total number of parameters are related in the sense that a polynomial bound on the complexity of a non-degenerate FNN considering all parameters leads to a polynomial bound on the complexity of the non-degenerate FNN considering only non-zero parameters and vice versa (see, e.g., Elbr\"achter et al.~\cite[Section 5]{elbrachter2020dnn} for more details).

\begin{definition}[Realizations associated to FNNs]\label{def5.3}
	Let $a \in C(\R,\R)$. Then we denote by $\mathcal{R}_a  \colon \mathcal{N} \rightarrow (\cup_{u,v \in \N} C(\R^u, \R^v))$ the function which satisfies for all $L \in \N$, $l_0, l_1, \dots , l_L \in \N,$ $\theta = ((W_1,B_1),(W_2,B_2),\dots,(W_L,B_L)) \in (\times_{k=1}^{L}( \R^{l_k \times l_{k-1}} \times \R^{l_k}))$, $x_0 \in \R^{l_0}$, $x_1 \in \R^{l_1},\dots, x_{L-1} \in \R^{l_{L-1}}$ with for all $k \in \N \cap (0,L)$
	\begin{align*}
		x_k=\mathcal{M}_a(W_k x_{k-1} +B_k)
	\end{align*} 
	that $\mathcal{R}_a \theta \in C(\R^{l_0}, \R^{l_L})$ and $(\mathcal{R}_a\theta)(x_0) = W_L x_{L-1} +B_L$ (cf.\ Notation \ref{def5.2} and Definition \ref{def5.1}).
\end{definition}

\begin{figure}[H]
	\centering
	\begin{tikzpicture}
	\tikzstyle{input} = [rectangle, minimum width=0.001cm, minimum height=0.8cm,text centered, draw=white]
	\tikzstyle{output} = [rectangle, minimum width=0.001cm, minimum height=0.8cm,text centered, draw=white]
	\tikzstyle{weights} = [rectangle, minimum width=2cm, minimum height=0.4cm,text centered, draw=black]
	\tikzstyle{bias} = [rectangle, minimum width=0.8cm, minimum height=0.4cm, text centered, draw=black]
	\tikzstyle{a} = [rectangle, minimum width=0.8cm, minimum height=0.4cm, text centered, draw=black]
	\tikzstyle{arrow} = [thick,->,>=stealth]
	
	\node (input) [input] {};
	\node (weight1) [weights, right of=input, xshift=2cm] {$W_1 x_0 + B_1$};
	\node(a1)[a,right of=weight1,xshift=1cm]{$\mathcal{M}_a$};
	\node (weight2)[weights,right of=a1,xshift=1cm]{$W_2 x_1 +B_2$};
	\node (a2)[a,right of=weight2,xshift=1cm]{$\mathcal{M}_a$};
	\node (input2) [input,right of=a2,xshift=0.2cm] {$\cdots$};
	\node (weightL) [weights, right of=input2, xshift=1cm] {$W_Lx_{L-1} + B_L$};
	\node (output) [output,right of=weightL,xshift=2cm] {};
	
	\draw [arrow] (input) --node[anchor = south]{Input $x_0$}(weight1);
	\draw [arrow] (weight1) -- (a1);
	\draw [arrow] (a1) -- (weight2);
	\draw [arrow] (weight2) -- (a2);
	\draw [arrow] (a2) --(input2);
	\draw [arrow] (input2) -- (weightL);
	\draw [arrow] (weightL) --node[anchor = south]{Output}(output);
	\end{tikzpicture}
	\caption{Realization of an FNN.}
	\label{figrealizationDNN}
\end{figure}
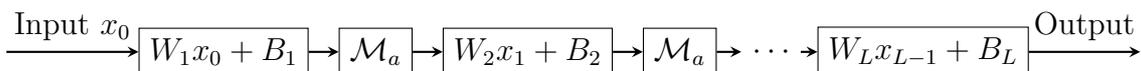

Intuitively speaking, $\mathcal{R}_a \theta$ is the function the FNN $\theta \in \mathcal{N}$ and the function $a \in C(\R,\R)$ represent. Here, one usually calls $a \in C(\R,\R)$ the activation function. The first layer receives an input signal $x_0$, performs a weighting with weights $W_1$ and adds the bias $B_1$. Then the activation function $a$ is applied in every component and $x_1$ is sent as input to the second layer. This procedure is repeated until the last layer is reached, where the signal $W_L x_{L-1} +B_L$ is sent as an output directly without applying the activation function (see Figure \ref{figrealizationDNN}). There are many commonly used activation functions, see, e.g., \cite{doi:10.1142/S0129065709002130,he2015delving,panigrahi2015navigation,ramachandran2017searching,wang2020influence} for further discussion.

\begin{definition}[Standard compositions of FNNs]\label{def5.5}
	We denote by $(\cdot)\bullet (\cdot) \colon \{(\theta_1,\theta_2)\in \mathcal{N} \times \mathcal{N}\mid \mathcal{I}(\theta_2) = \mathcal{O}(\theta_1)\} \rightarrow \mathcal{N}$ the function which satisfies for all $\mathfrak{L},L \in \N$, $\mathfrak{l}_0, \mathfrak{l}_1, \dots,\mathfrak{l}_{\mathfrak{L}}$, $l_0, l_1, \dots, l_L \in \N$,
	\begin{align*}
	\theta_1 &= ((\mathcal{W}_1,\mathfrak{B}_1),(\mathcal{W}_2,\mathfrak{B}_2), \dots, (\mathcal{W}_{\mathfrak{L}},\mathfrak{B}_{\mathfrak{L}}))\in (\times_{k=1}^{\mathfrak{L}}( \R^{\mathfrak{l}_k \times \mathfrak{l}_{k-1}}\times \R^{\mathfrak{l}_k})),\\
	\theta_2 &= ((W_1,B_1),(W_2,B_2),\dots,(W_L,B_L)) \in (\times_{k=1}^{L}( \R^{l_k \times l_{k-1}} \times \R^{l_k}))
	\end{align*}
	 with $l_0 =\mathcal{I}(\theta_2) = \mathcal{O}(\theta_1) = \mathfrak{l}_{\mathfrak{L}}$ that
	\begin{align*}
	\theta_2 \bullet \theta_1 =& \left((\mathcal{W}_1,\mathfrak{B}_1),(\mathcal{W}_2,\mathfrak{B}_2),\dots,(\mathcal{W}_{\mathfrak{L}-1},\mathfrak{B}_{\mathfrak{L}-1}),\right.\\ &\left.(W_1 \mathcal{W}_{\mathfrak{L}},W_1 \mathfrak{B}_{\mathfrak{L}} +B_1),(W_2,B_2),(W_3,B_3),\dots,(W_L,B_L)\right)
	\end{align*}
	(cf.\ Definition \ref{def5.1}).
\end{definition}

\begin{lemma}\label{lem4.6}
	Let $\theta_1,\theta_2 \in \mathcal{N}$ be as in Definition \ref{def5.5} (cf.\ Definition \ref{def5.1}). Then
	\begin{enumerate}[label = \roman*)]
		\item\label{lem4.61} it holds that
		\begin{align}
		\mathcal{D}(\theta_2 \bullet \theta_1) = (\mathbb{D}_0(\theta_1),\mathbb{D}_1(\theta_1),\dots,\mathbb{D}_{\mathcal{L}(\theta_1)-1}(\theta_1),\mathbb{D}_1(\theta_2),\mathbb{D}_2(\theta_2),\dots,\mathbb{D}_{\mathcal{L}(\theta_2)}(\theta_2)),
		\end{align}
		and
		\item\label{lem4.62} it holds for all $a \in C(\R,\R)$ that $\mathcal{R}_a (\theta_2 \bullet \theta_1) \in C(\R^{\mathcal{I}(\theta_1)}, \R^{\mathcal{O}(\theta_2)})$
		and
		\begin{align}
		\mathcal{R}_a (\theta_2 \bullet \theta_1) = [\mathcal{R}_a (\theta_2)] \circ [\mathcal{R}_a (\theta_1)]
		\end{align}
	\end{enumerate}
	(cf.\ Definitions \ref{def5.3} and \ref{def5.5}).
\end{lemma}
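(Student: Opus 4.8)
The plan is to prove both parts by unwinding \Cref{def5.5} and tracking the layer dimensions and the realization map layer-by-layer. Write $\theta_1 = ((\mathcal{W}_1,\mathfrak{B}_1),\dots,(\mathcal{W}_{\mathfrak{L}},\mathfrak{B}_{\mathfrak{L}}))$ with dimensions $\mathfrak{l}_0,\dots,\mathfrak{l}_{\mathfrak{L}}$ and $\theta_2 = ((W_1,B_1),\dots,(W_L,B_L))$ with dimensions $l_0,\dots,l_L$, where $l_0 = \mathfrak{l}_{\mathfrak{L}}$. From the explicit formula for $\theta_2\bullet\theta_1$ in \Cref{def5.5}, the weight-bias tuples are, in order, $(\mathcal{W}_1,\mathfrak{B}_1),\dots,(\mathcal{W}_{\mathfrak{L}-1},\mathfrak{B}_{\mathfrak{L}-1})$, then $(W_1\mathcal{W}_{\mathfrak{L}}, W_1\mathfrak{B}_{\mathfrak{L}}+B_1)$, then $(W_2,B_2),\dots,(W_L,B_L)$; this is a total of $(\mathfrak{L}-1) + 1 + (L-1) = \mathfrak{L}+L-1$ tuples, so $\mathcal{L}(\theta_2\bullet\theta_1) = \mathfrak{L}+L-1$.

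For part \ref{lem4.61}, I would simply read off the input/output dimensions of each of these tuples. The first $\mathfrak{L}-1$ tuples carry the dimensions $\mathfrak{l}_0,\mathfrak{l}_1,\dots,\mathfrak{l}_{\mathfrak{L}-1}$ exactly as in $\theta_1$. The splice tuple $(W_1\mathcal{W}_{\mathfrak{L}}, W_1\mathfrak{B}_{\mathfrak{L}}+B_1)$ has $W_1\mathcal{W}_{\mathfrak{L}}\in\R^{l_1\times\mathfrak{l}_{\mathfrak{L}-1}}$ (using $\mathcal{W}_{\mathfrak{L}}\in\R^{\mathfrak{l}_{\mathfrak{L}}\times\mathfrak{l}_{\mathfrak{L}-1}}$, $W_1\in\R^{l_1\times l_0}$, $l_0=\mathfrak{l}_{\mathfrak{L}}$) and bias in $\R^{l_1}$, so its input dimension is $\mathfrak{l}_{\mathfrak{L}-1}$ and output dimension $l_1$; this matches the transition from layer-size $\mathbb{D}_{\mathcal{L}(\theta_1)-1}(\theta_1)=\mathfrak{l}_{\mathfrak{L}-1}$ to $\mathbb{D}_1(\theta_2)=l_1$. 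The remaining tuples $(W_2,B_2),\dots,(W_L,B_L)$ carry dimensions $l_1,l_2,\dots,l_L$. Concatenating, $\mathcal{D}(\theta_2\bullet\theta_1) = (\mathfrak{l}_0,\dots,\mathfrak{l}_{\mathfrak{L}-1},l_1,\dots,l_L)$, which is exactly the claimed tuple once one rewrites $\mathfrak{l}_i = \mathbb{D}_i(\theta_1)$ and $l_j = \mathbb{D}_j(\theta_2)$ using \Cref{def5.1}; the fact that $\theta_2\bullet\theta_1\in\mathcal{N}$ is then immediate since all dimensions are positive integers.

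For part \ref{lem4.62}, fix $a\in C(\R,\R)$ and $x_0\in\R^{\mathfrak{l}_0} = \R^{\mathcal{I}(\theta_1)}$. I would run the recursion of \Cref{def5.3} for $\theta_2\bullet\theta_1$ and show the intermediate activations agree with those of $\theta_1$ for the first $\mathfrak{L}-1$ steps, giving $x_{\mathfrak{L}-1} = \mathcal{M}_a(\mathcal{W}_{\mathfrak{L}-1}x_{\mathfrak{L}-2}+\mathfrak{B}_{\mathfrak{L}-1})$, the same vector one feeds into layer $\mathfrak{L}$ of $\theta_1$. The key computation is at the splice: applying the splice tuple without activation would give $W_1\mathcal{W}_{\mathfrak{L}}x_{\mathfrak{L}-1} + W_1\mathfrak{B}_{\mathfrak{L}} + B_1 = W_1(\mathcal{W}_{\mathfrak{L}}x_{\mathfrak{L}-1}+\mathfrak{B}_{\mathfrak{L}}) + B_1$; since $(\mathcal{R}_a\theta_1)(x_0) = \mathcal{W}_{\mathfrak{L}}x_{\mathfrak{L}-1}+\mathfrak{B}_{\mathfrak{L}}$ (no activation on the last layer of $\theta_1$), the splice layer's pre-activation value is $W_1[(\mathcal{R}_a\theta_1)(x_0)] + B_1$, and after applying $\mathcal{M}_a$ this is precisely the first hidden activation of $\theta_2$ evaluated at input $(\mathcal{R}_a\theta_1)(x_0)$. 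The remaining layers $(W_2,B_2),\dots,(W_L,B_L)$ then reproduce exactly the rest of the recursion defining $(\mathcal{R}_a\theta_2)$ on that input, so $(\mathcal{R}_a(\theta_2\bullet\theta_1))(x_0) = (\mathcal{R}_a\theta_2)((\mathcal{R}_a\theta_1)(x_0))$, i.e.\ $\mathcal{R}_a(\theta_2\bullet\theta_1) = [\mathcal{R}_a\theta_2]\circ[\mathcal{R}_a\theta_1]$, and the codomain $C(\R^{\mathcal{I}(\theta_1)},\R^{\mathcal{O}(\theta_2)})$ follows from part \ref{lem4.61}.

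The main obstacle is purely bookkeeping: being careful that the last layer of $\theta_1$ carries no activation (so its output really is the affine value $\mathcal{W}_{\mathfrak{L}}x_{\mathfrak{L}-1}+\mathfrak{B}_{\mathfrak{L}}$, not $\mathcal{M}_a$ of it), that the first layer of $\theta_2$ is exactly the one being merged, and that the index shifts ($\mathfrak{L}-1$ layers from $\theta_1$, then the merged layer, then $L-1$ layers from $\theta_2$) are handled consistently. The edge cases $\mathfrak{L}=1$ or $L=1$ deserve a one-line check: if $\mathfrak{L}=1$ the list from $\theta_1$ before the splice is empty and $\theta_1$ contributes only its single affine map into the merge, while if $L=1$ the merged layer is the output layer; in both cases the formulas above still read correctly.
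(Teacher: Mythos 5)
Your proof is correct; the paper itself does not prove this lemma but defers to \cite[Section 2]{grohs2019spacetime} and \cite[Section 5]{jentzen2018proof}, where the argument is precisely this layer-by-layer unwinding of Definitions \ref{def5.3} and \ref{def5.5}, including the key observation that the last layer of $\theta_1$ carries no activation so the splice tuple's pre-activation equals $W_1[(\mathcal{R}_a\theta_1)(x_0)]+B_1$. Your handling of the edge cases $\mathfrak{L}=1$ and $L=1$ is also accurate, so nothing is missing.
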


	Statement \ref{lem4.62} in Lemma \ref{lem4.6} above shows the natural property that the realization of two composed FNNs is the composition of the realization of the two FNNs. On the other side, \ref{lem4.61} shows that $\mathfrak{l}_{\mathcal{L}(\theta_1)-1}l_1 \leqslant \mathcal{P}(\theta_2\bullet\theta_1)$ and thus, in \cite{jentzen2018proof}, $\mathcal{P}(\theta_2\bullet\theta_1)$ potentially might  grow  as $\mathcal{P}(\theta_2)\mathcal{P}(\theta_1)$ leading to an exponential complexity. This is solved in \cite{jentzen2018proof} by plugging in the artificial identity between the two FNNs (cf.\ \cite[Proposition 5.1, Proposition 5.2]{jentzen2018proof}). 

\begin{definition}[Parallelizations of FNNs with the same depth]\label{def5.7}
	Let $n \in \N$. Then we denote by
	\begin{align}
	\mathbf{P}_n  \colon \left\{(\theta_1,\theta_2,\dots,\theta_n)\in \mathcal{N}^n  \middle\vert \mathcal{L}(\theta_1)=\mathcal{L}(\theta_2)=\dots=\mathcal{L}(\theta_n) \right\} \rightarrow \mathcal{N}
	\end{align}
	the function which satisfies for all $L \in \N$, $l^1_{0}, l^1_{1}, . . . , l^1_{L}$, $l^2_{0},
	l^2_{1},\dots , l^2_{L}$, \dots, $l^n_{0}, l^n_{1},\dots , l^n_{L} \in \N$, 
	\begin{align*}
	\theta_1 &= ((W^1_{1},B^1_{1}),(W^1_{2},B^1_{2}),\dots,(W^1_{L},B^1_{L})) \in (\times_{k=1}^{L}( \R^{l^1_{k} \times l^1_{k-1}}\times \R^{l^1_{k}})),\\ 
	\theta_2 &= ((W^2_{1},B^2_{1}),(W^2_{2},B^2_{2}),\dots,(W^2_{L},B^2_{L})) \in (\times_{k=1}^{L}( \R^{l^2_{k} \times l^2_{k-1}}\times \R^{l^2_{k}})),\\
	 &\ \vdots\\
	  \theta_n &= ((W^n_{1},B^n_{1}),(W^n_{2},B^n_{2}),\dots,(W^n_{L},B^n_{L})) \in (\times_{k=1}^{L}( \R^{l^n_{k} \times l^n_{k-1}}\times \R^{l^n_{k}}))
	\end{align*}
	 that
	\begin{align}
	\mathbf{P}_n(\theta_1,\theta_2,\dots,\theta_n) = \bBigg@{5}(& \left. \left(
	\begin{pmatrix}
	W^1_{1} & 0 & 0 & \cdots & 0 \\
	0 & W^2_{1} & 0 & \cdots & 0 \\
	0 & 0 & W^3_{1} & \cdots & 0 \\
	\vdots & \vdots & \vdots & \ddots & \vdots \\
	0 & 0 & 0 & \cdots & W^n_{1}
	\end{pmatrix},
	\begin{pmatrix}
	B^1_{1} \\
	B^2_{1} \\
	B^3_{1} \\
	\vdots \\
	B^n_{1}
	\end{pmatrix} \right) ,\right.\\ &\left.\left(
	\begin{pmatrix}
	W^1_{2} & 0 & 0 & \cdots & 0 \\
	0 & W^2_{2} & 0 & \cdots & 0 \\
	0 & 0 & W^3_{2} & \cdots & 0 \\
	\vdots & \vdots & \vdots & \ddots & \vdots \\
	0 & 0 & 0 & \cdots & W^n_{2}
	\end{pmatrix},
	\begin{pmatrix}
	B^1_{2} \\
	B^2_{2} \\
	B^3_{2} \\
	\vdots \\
	B^n_{2}
	\end{pmatrix} \right) ,\dots,\right.\\ &  \left(
	\begin{pmatrix}
	W^1_{L} & 0 & 0 & \cdots & 0 \\
	0 & W^2_{L} & 0 & \cdots & 0 \\
	0 & 0 & W^3_{L} & \cdots & 0 \\
	\vdots & \vdots & \vdots & \ddots & \vdots \\
	0 & 0 & 0 & \cdots & W^n_{L}
	\end{pmatrix},
	\begin{pmatrix}
	B^1_{L} \\
	B^2_{L} \\
	B^3_{L} \\
	\vdots \\
	B^n_{L}
	\end{pmatrix} \right) \bBigg@{5})
	\end{align}
	(cf.\ Definition \ref{def5.1}).
\end{definition}

\begin{lemma}\label{lem4.10}
	Let $a \in C(\R,\R)$ be the activation function and let $n \in \N$, $\theta = (\theta_1,\theta_2,\dots,\theta_n)\in \mathcal{N}^n$ satisfy  $\mathcal{L}(\theta_1)=\mathcal{L}(\theta_2)=\dots=\mathcal{L}(\theta_n)$ (cf.\ Definition \ref{def5.1}). Then
	\begin{enumerate}[label = \roman*)]
		\item it holds that
		\begin{align}
		\mathcal{D}(\mathbf{P}_n(\theta)) = \left(\textstyle\sum_{j=1}^{n} \mathbb{D}_0(\theta_j), \textstyle\sum_{j=1}^{n} \mathbb{D}_1(\theta_j),\dots, \textstyle\sum_{j=1}^{n} \mathbb{D}_{\mathcal{L}(\theta_1)}(\theta_j) \right)
		\end{align}
		and
		\item it holds that for all $x_1 \in \R^{\mathcal{I}(\theta_1)}, x_2 \in \R^{\mathcal{I}(\theta_2)},\dots,x_n \in \R^{\mathcal{I}(\theta_n)}$ that
		\begin{align}
		(\mathcal{R}_a(\mathbf{P}_n(\theta)))(x_1,x_2,\dots,x_n) = ((\mathcal{R}_a\theta_1)(x_1), (\mathcal{R}_a\theta_2)(x_2),\dots,(\mathcal{R}_a\theta_n)(x_n))
		\end{align}
	\end{enumerate}
	(cf.\ Definitions \ref{def5.3} and \ref{def5.7}).
\end{lemma}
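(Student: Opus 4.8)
The plan is to prove both assertions by directly unwinding \Cref{def5.7} and \Cref{def5.3}, using throughout the elementary fact that left multiplication by a block-diagonal matrix, addition of a block-stacked vector, and componentwise application of $\mathcal{M}_a$ all respect the block (concatenation) structure of a vector.

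For part (i) I would simply read off the architecture from the explicit form of $\mathbf{P}_n(\theta)$ in \Cref{def5.7}. Writing $L = \mathcal{L}(\theta_1) = \dots = \mathcal{L}(\theta_n)$ and $l^j_k$, $W^j_k$, $B^j_k$ for the layer sizes, weights and biases of $\theta_j$ as in \Cref{def5.7}, the $k$-th weight matrix of $\mathbf{P}_n(\theta)$ is the block-diagonal matrix with diagonal blocks $W^1_k, W^2_k, \dots, W^n_k$, hence it lies in $\R^{(\sum_{j=1}^n l^j_k) \times (\sum_{j=1}^n l^j_{k-1})}$, and the $k$-th bias is the stacked vector in $\R^{\sum_{j=1}^n l^j_k}$. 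Thus $\mathbf{P}_n(\theta) \in \times_{k=1}^{L}(\R^{m_k \times m_{k-1}} \times \R^{m_k})$ with $m_k = \sum_{j=1}^n l^j_k$, and the definition of $\mathcal{D}$ gives $\mathcal{D}(\mathbf{P}_n(\theta)) = (m_0, m_1, \dots, m_L)$. Since $\mathcal{L}(\theta_j) = L$ for every $j$, one has $l^j_k = \mathbb{D}_k(\theta_j)$ for all $k \le L$, which yields the claimed formula; in particular $\mathcal{L}(\mathbf{P}_n(\theta)) = L$, $\mathcal{I}(\mathbf{P}_n(\theta)) = \sum_{j=1}^n \mathcal{I}(\theta_j)$ and $\mathcal{O}(\mathbf{P}_n(\theta)) = \sum_{j=1}^n \mathcal{O}(\theta_j)$, so the domain and codomain in part (ii) are consistent.

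For part (ii), fix inputs $x_j \in \R^{\mathcal{I}(\theta_j)}$, set $x_j^{(0)} = x_j$, and let $x_j^{(k)} = \mathcal{M}_a(W^j_k x_j^{(k-1)} + B^j_k)$ for $k \in \N \cap (0,L)$ be the intermediate signals appearing in \Cref{def5.3} for the realization of $\theta_j$. The key claim, proved by induction on $k \in \{0, 1, \dots, L-1\}$, is that the layer-$k$ intermediate signal of $\mathbf{P}_n(\theta)$ equals the concatenation $(x_1^{(k)}, x_2^{(k)}, \dots, x_n^{(k)})$, decomposed into blocks of sizes $l^1_k, \dots, l^n_k$. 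The base case $k=0$ is the definition of the input. For the inductive step, if the layer-$(k-1)$ signal is $(x_1^{(k-1)}, \dots, x_n^{(k-1)})$, then multiplying by the block-diagonal $k$-th weight matrix of $\mathbf{P}_n(\theta)$ gives $(W^1_k x_1^{(k-1)}, \dots, W^n_k x_n^{(k-1)})$, adding the stacked $k$-th bias gives $(W^1_k x_1^{(k-1)} + B^1_k, \dots, W^n_k x_n^{(k-1)} + B^n_k)$, and applying $\mathcal{M}_a$ — which acts componentwise by \Cref{def5.2}, hence blockwise — gives $(x_1^{(k)}, \dots, x_n^{(k)})$. Performing the same block-diagonal/stacked computation once more at layer $L$, now without $\mathcal{M}_a$, shows $(\mathcal{R}_a(\mathbf{P}_n(\theta)))(x_1, \dots, x_n) = (W^1_L x_1^{(L-1)} + B^1_L, \dots, W^n_L x_n^{(L-1)} + B^n_L) = ((\mathcal{R}_a\theta_1)(x_1), \dots, (\mathcal{R}_a\theta_n)(x_n))$.

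There is no serious obstacle here; the lemma is a bookkeeping statement whose content is that the parallelization construction does exactly what its name suggests. The only point requiring a little care is to set up the induction on the intermediate signals of \Cref{def5.3} cleanly — keeping track of the block sizes $l^1_k, \dots, l^n_k$ guaranteed by part (i), and noting that $\mathcal{M}_a$ commutes with concatenation precisely because it is defined componentwise.
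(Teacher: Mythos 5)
Your proof is correct and is exactly the standard argument: the paper itself omits the proof of this lemma, deferring to \cite{grohs2019spacetime} and \cite{jentzen2018proof}, where the same computation (reading off the block-diagonal dimensions for part (i) and inducting on the intermediate signals, using that block-diagonal multiplication, bias stacking, and the componentwise $\mathcal{M}_a$ all respect concatenation, for part (ii)) is carried out. Nothing is missing.
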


The second statement in Lemma \ref{lem4.10} above shows the natural property that the realization of parallelized FNNs is the parallelization of the realizations of the FNNs and the first statement allows us to conclude the complexity bounds in Lemma \ref{sumDNN} below.

\begin{definition}[Matrix multiplications of FNNs]\label{def5.9}
	Let $L \in \N,\ l_0,l_1,\dots,l_L,n \in \N$, $\theta = ((W_1,B_1),(W_2,B_2),\dots,(W_L,B_L)) \in (\times_{k=1}^{L}( \R^{l_k \times l_{k-1}} \times \R^{l_k}))$, $\mathcal{W} \in \R^{n \times l_L}$, $\mathbb{W} \in \R^{l_0 \times n}$. Then we denote by $\mathcal{W} \circledast \theta \in \mathcal{N}$ and $\theta \circledast \mathbb{W} \in \mathcal{N}$ the FNNs given by
	\begin{align}
	\mathcal{W} \circledast \theta = ((W_1,B_1),(W_2,B_2),\dots,(W_{L-1},B_{L-1}),(\mathcal{W}W_L,\mathcal{W}B_L))
	\end{align}
	and
	\begin{align}
	\theta \circledast \mathbb{W} = ((W_1 \mathbb{W},B_1),(W_2,B_2),\dots,(W_L,B_L))
	\end{align}
	(cf.\ Definition \ref{def5.1}).
\end{definition}

The next result shows the  that the matrix multiplications of an FNN correspond to the linear projection of the output, respectively input, with the multiplied matrix.

\begin{lemma}\label{lem4.12}
	Let $a \in C(\R,\R)$  and let $\ \theta \in \mathcal{N}$ (cf.\ Definition \ref{def5.1}). Then
	\begin{enumerate}[label =\roman*)]
		\item it holds for all $m \in \N$, $\mathcal{W} \in \R^{m \times \mathcal{O}(\theta)}$ that $\mathcal{R}_a(\mathcal{W} \circledast \theta) \in C(\R^{\mathcal{I}(\theta)}, \R^m)$,
		\item it holds for all $m \in \N$, $\mathcal{W} \in \R^{m \times \mathcal{O}(\theta)},\ x \in \R^{\mathcal{I}(\theta)}$ that 
		\begin{align}
		(\mathcal{R}_a(\mathcal{W} \circledast \theta))(x) = \mathcal{W}(\mathcal{R}_a \theta(x)),
		\end{align}
		\item it holds for all $n \in \N$, $\mathbb{W} \in \R^{\mathcal{I}(\theta) \times n}$ that $\mathcal{R}_a(\theta \circledast \mathbb{W}) \in C(\R^{n}, \R^{\mathcal{O}(\theta)})$, and
		\item it holds for all $n \in \N$, $\mathbb{W} \in \R^{\mathcal{I}(\theta) \times n}, \ x \in \R^n$ that 
		\begin{align}
		(\mathcal{R}_a(\theta \circledast \mathbb{W}))(x) = \mathcal{R}_a \theta(\mathbb{W}x)
		\end{align}
	\end{enumerate}
	(cf.\ Definitions \ref{def5.3} and \ref{def5.9}).
\end{lemma}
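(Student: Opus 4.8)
The plan is to use that the two operations in Definition~\ref{def5.9} each modify only a single layer of $\theta$ --- the terminal layer for the left multiplication $\mathcal{W}\circledast\theta$, the initial layer for the right multiplication $\theta\circledast\mathbb{W}$ --- and that this layer enters the realization recursion of Definition~\ref{def5.3} only through an affine map. Consequently, left (resp.\ right) multiplication of $\theta$ by a matrix amounts to post-composing (resp.\ pre-composing) $\mathcal{R}_a\theta$ with that linear map, and a little dimension bookkeeping gives the asserted domains and codomains (continuity of all realizations involved being automatic from Definition~\ref{def5.3}, since $a$ is continuous and affine maps are continuous).

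Concretely, for items i) and ii) I would write $\theta=((W_1,B_1),\dots,(W_L,B_L))\in(\times_{k=1}^{L}(\R^{l_k\times l_{k-1}}\times\R^{l_k}))$, so that $\mathcal{I}(\theta)=l_0$ and $\mathcal{O}(\theta)=l_L$, and fix $m\in\N$ and $\mathcal{W}\in\R^{m\times l_L}$. By Definition~\ref{def5.9} the network $\mathcal{W}\circledast\theta$ has the same first $L-1$ layers $(W_1,B_1),\dots,(W_{L-1},B_{L-1})$ as $\theta$ and terminal layer $(\mathcal{W}W_L,\mathcal{W}B_L)\in\R^{m\times l_{L-1}}\times\R^m$; hence $\mathcal{I}(\mathcal{W}\circledast\theta)=l_0$ and $\mathcal{O}(\mathcal{W}\circledast\theta)=m$, which is item i). For item ii) I would fix $x\in\R^{l_0}$, let $x_1,\dots,x_{L-1}$ be the vectors uniquely determined by $x_0=x$ and $x_k=\mathcal{M}_a(W_kx_{k-1}+B_k)$ for $k\in\N\cap(0,L)$ as in Definition~\ref{def5.3}, and note that since the first $L-1$ layers of $\mathcal{W}\circledast\theta$ agree with those of $\theta$, the same sequence $x_1,\dots,x_{L-1}$ computes $\mathcal{R}_a(\mathcal{W}\circledast\theta)$; therefore
\begin{align*}
(\mathcal{R}_a(\mathcal{W}\circledast\theta))(x)=(\mathcal{W}W_L)x_{L-1}+\mathcal{W}B_L=\mathcal{W}(W_Lx_{L-1}+B_L)=\mathcal{W}((\mathcal{R}_a\theta)(x)).
\end{align*}
The degenerate case $L=1$ (empty hidden-layer sequence) is the same identity $\mathcal{W}W_1x+\mathcal{W}B_1=\mathcal{W}(W_1x+B_1)$.

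For items iii) and iv), I would fix $n\in\N$ and $\mathbb{W}\in\R^{l_0\times n}$. By Definition~\ref{def5.9} the network $\theta\circledast\mathbb{W}$ differs from $\theta$ only in its first layer, which becomes $(W_1\mathbb{W},B_1)\in\R^{l_1\times n}\times\R^{l_1}$; thus $\mathcal{I}(\theta\circledast\mathbb{W})=n$ and $\mathcal{O}(\theta\circledast\mathbb{W})=l_L$, giving item iii). For item iv) I would again invoke Definition~\ref{def5.3}: for input $x\in\R^n$ the first hidden vector produced by $\theta\circledast\mathbb{W}$ is $\mathcal{M}_a(W_1\mathbb{W}x+B_1)=\mathcal{M}_a(W_1(\mathbb{W}x)+B_1)$, which is exactly the first hidden vector produced by $\theta$ on input $\mathbb{W}x\in\R^{l_0}$; since all remaining layers of $\theta\circledast\mathbb{W}$ and $\theta$ coincide, the two realization recursions agree from there on, and hence $(\mathcal{R}_a(\theta\circledast\mathbb{W}))(x)=(\mathcal{R}_a\theta)(\mathbb{W}x)$, with the case $L=1$ again immediate.

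There is no genuinely hard step here; the only points requiring care are tracking the layer dimensions so that the domains and codomains claimed in items i) and iii) come out right, and treating one-layer FNNs separately since then there are no hidden layers to recurse through. As an alternative, items ii) and iv) follow with no extra work from statement~\ref{lem4.62} in Lemma~\ref{lem4.6} once one observes that $\mathcal{W}\circledast\theta=((\mathcal{W},0))\bullet\theta$ and $\theta\circledast\mathbb{W}=\theta\bullet((\mathbb{W},0))$, where $0$ denotes the zero vector of the appropriate dimension and $((\mathcal{W},0))$, $((\mathbb{W},0))$ are the one-layer FNNs whose realizations are the linear maps $y\mapsto\mathcal{W}y$ and $x\mapsto\mathbb{W}x$, respectively.
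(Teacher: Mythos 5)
Your proof is correct and is essentially the argument the paper relies on: the paper itself omits the proof of Lemma~\ref{lem4.12} and defers to the cited references, where the statement is established by exactly this direct computation from Definitions~\ref{def5.3} and~\ref{def5.9} (only one layer changes, and it enters the realization recursion affinely). Your alternative reduction to Lemma~\ref{lem4.6} via $\mathcal{W}\circledast\theta=((\mathcal{W},0))\bullet\theta$ and $\theta\circledast\mathbb{W}=\theta\bullet((\mathbb{W},0))$ is also valid and consistent with Definition~\ref{def5.5}.
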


\begin{lemma}[Sum of FNNs with same architecture]\label{sumDNN}
	Let $a \in C(\R,\R)$, $M \in \N$, $h_1,h_2, \allowbreak \dots, \allowbreak  h_M \in \R$ and let $(\theta_m)_{m \in \{1,2, \allowbreak \dots, \allowbreak M \}} \in \mathcal{N}$ satisfy that $\mathcal{D}(\theta_1)=\mathcal{D}(\theta_2)=\dots=\mathcal{D}(\theta_M)$ 	(cf.\ Definition \ref{def5.1}). Then there exists $\psi \in \mathcal{N}$ such that for all $x \in \R^{\mathcal{I}(\theta_1)}$ it holds that $\mathcal{R}_a \psi \in C(\R^{\mathcal{I}(\theta_1)},\R^{\mathcal{O}(\theta_1)})$, $\mathcal{P}(\psi) \leq M^2 \mathcal{P}(\theta_1)$, and
	\begin{align}
(	\mathcal{R}_a \psi)(x) = \textstyle\sum\limits_{m=1}^{M} h_m [(\mathcal{R}_a \theta_m)(x)]
	\end{align}
	(cf.\ Definition \ref{def5.3}).
\end{lemma}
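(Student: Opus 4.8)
The plan is to realize the weighted sum $x\mapsto\sum_{m=1}^M h_m[(\mathcal{R}_a\theta_m)(x)]$ by first duplicating the input, then running $\theta_1,\dots,\theta_M$ in parallel, and finally forming the weighted sum of the outputs, where the first and last operations are realized by the matrix multiplications of \Cref{def5.9} and the middle one by the parallelization of \Cref{def5.7}. Write $\mathcal{D}(\theta_1)=(l_0,l_1,\dots,l_L)$ for the common architecture; then $\mathcal{L}(\theta_1)=\cdots=\mathcal{L}(\theta_M)=L$, so $\mathbf{P}_M(\theta_1,\dots,\theta_M)\in\mathcal{N}$ is well defined and, by \Cref{lem4.10}, has architecture $(Ml_0,Ml_1,\dots,Ml_L)$, input dimension $Ml_0$, and output dimension $Ml_L$. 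I would introduce the duplication matrix $\mathbb{W}\in\R^{(Ml_0)\times l_0}$ consisting of $M$ vertically stacked copies of $\Id_{l_0}$, so that $\mathbb{W}x=(x,x,\dots,x)$ for all $x\in\R^{l_0}$, and the weighted-summation matrix $\mathcal{W}\in\R^{l_L\times(Ml_L)}$ consisting of the $M$ horizontally concatenated blocks $h_1\Id_{l_L},\dots,h_M\Id_{l_L}$, so that $\mathcal{W}(y_1,\dots,y_M)=\sum_{m=1}^M h_my_m$ for all $y_1,\dots,y_M\in\R^{l_L}$ (cf.\ \Cref{def5.8}). Then I would set
\begin{align*}
\psi=\mathcal{W}\circledast\big((\mathbf{P}_M(\theta_1,\dots,\theta_M))\circledast\mathbb{W}\big)\in\mathcal{N}.
\end{align*}

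To verify the realization, \Cref{lem4.12} (parts i and iii) gives $\mathcal{R}_a\psi\in C(\R^{l_0},\R^{l_L})=C(\R^{\mathcal{I}(\theta_1)},\R^{\mathcal{O}(\theta_1)})$, and combining \Cref{lem4.12} (parts ii and iv), \Cref{lem4.10} (part ii), and the defining identities for $\mathbb{W}$ and $\mathcal{W}$ yields, for every $x\in\R^{l_0}$,
\begin{align*}
(\mathcal{R}_a\psi)(x)
&=\mathcal{W}\big[(\mathcal{R}_a(\mathbf{P}_M(\theta_1,\dots,\theta_M)))(\mathbb{W}x)\big]\\
&=\mathcal{W}\big[((\mathcal{R}_a\theta_1)(x),(\mathcal{R}_a\theta_2)(x),\dots,(\mathcal{R}_a\theta_M)(x))\big]\\
&=\sum_{m=1}^M h_m[(\mathcal{R}_a\theta_m)(x)],
\end{align*}
which is the asserted identity.

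It remains to bound $\mathcal{P}(\psi)$. By \Cref{lem4.10} (part i) and \Cref{def5.9}, the right-multiplication by $\mathbb{W}$ restores the first layer width of $\mathbf{P}_M(\theta_1,\dots,\theta_M)$ from $Ml_0$ to $l_0$ and the left-multiplication by $\mathcal{W}$ restores its last layer width from $Ml_L$ to $l_L$, so $\mathcal{D}(\psi)=(l_0,Ml_1,Ml_2,\dots,Ml_{L-1},l_L)$ (for $L=1$ this reduces to $(l_0,l_L)$ and the bound below is immediate). Hence, using $M\geq1$ and comparing summand by summand with $\mathcal{P}(\theta_1)=\sum_{k=1}^L l_k(l_{k-1}+1)$, we have $Ml_1(l_0+1)\leq M^2l_1(l_0+1)$, $Ml_k(Ml_{k-1}+1)\leq M^2l_k(l_{k-1}+1)$ for $2\leq k\leq L-1$, and $l_L(Ml_{L-1}+1)\leq M^2l_L(l_{L-1}+1)$; adding these inequalities gives $\mathcal{P}(\psi)\leq M^2\mathcal{P}(\theta_1)$. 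The factor $M^2$ (rather than $M$) and the inequality (rather than equality) come precisely from the inner layers, whose widths get multiplied by $M$ on both sides. The only real care is in tracking which layer widths of $\psi$ carry the factor $M$; once $\mathcal{D}(\psi)$ is pinned down, both the realization identity and the complexity estimate are routine, so I do not anticipate a genuine obstacle.
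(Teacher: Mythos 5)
Your proposal is correct and follows essentially the same route the paper indicates: the paper's proof of \Cref{sumDNN} is exactly the combination of the parallelization \Cref{lem4.10} with the matrix multiplications of \Cref{lem4.12} (as in \cite[Lemma 5.4]{jentzen2018proof}), i.e., duplicating the input, running the $\theta_m$ in parallel, and contracting with the weighted-summation matrix. Your architecture bookkeeping and the resulting bound $\mathcal{P}(\psi)\leq M^2\mathcal{P}(\theta_1)$ are accurate.
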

The proof of Lemma \ref{sumDNN} follows from Lemma \ref{lem4.10} and Lemma \ref{lem4.12}, see, e.g., \cite[Lemma 5.4]{jentzen2018proof} for further details.
Lemma \ref{sumDNN} assures the existence of an FNN whose realization is equal to a weighted sum of other FNNs with same architecture. This lemma is used to obtain a similar result for ResNets in \Cref{lemparallelResNets} in Section~\ref{ResidualNetworks} below, which in turn,  allows us to apply  Monte Carlo type estimates using ResNets.

\section{Residual networks (ResNets)}\label{ResidualNetworks}

	In the following section we present definitions and results on ResNets  analogous to the ones for FNNs which we  recalled in Section \ref{FFDNNs} above. The main difference between FNNs as introduced in Section \ref{FFDNNs} and general ResNets is the shortcut connection between different layers, see He et al.~\cite{he2016deep}. There are several different architectures of ResNets, see, e.g., \cite{avelin2021neural,e2019priori,he2016deep,he2016identity,muller2020space,petersen2018optimal,weinan2018deep,zhang2017residual}. In this article, we present a definition inspired by \cite{he2016identity,muller2020space}, where ResNets are designed as a sequence of FNNs and corresponding shortcut connections. This shortcut connections are realized by linear projections of the output from the previous step (cf.\ Definition \ref{ReaResNet} below).

	\begin{definition}[ResNet]\label{DefResNet}
	We denote the set of all ResNets by
	\begin{align*}
	\ResNet  \colon= \cup_{n\in\N}\ResNet_n,
	\end{align*}
	where for every $n \in \N$,
	\begin{align*}
	\ResNet_n  \colon= 
	\cup_{\left(d_0,d_1,\dots,d_n\right)\in \N^n}
	\left\{\left(\Gamma_1,\theta_1,\Gamma_2,\theta_2,\dots,\Gamma_n,\theta_n\right)\middle\vert
	\begin{array}{cc}
	\theta_k\in \mathcal{N} \, \text{is such that}\\
\mathcal{I}(\theta_k) = d_{k-1}, \mathcal{O}(\theta_k) = d_{k},\\
\text{and }	\Gamma_k\in\R^{d_k\times d_{k-1}}\\
	\forall \, k\in\{1,2,\dots,n\}
	\end{array}
	\right\}
	\end{align*}
(cf.~\Cref{def5.1}).
	Let $n\in\N$, $d_0,d_1,d_2,\dots,d_n\in\N$, and let the ResNet $\Theta\in\ResNet$ be given by
	\begin{align*}
	\Theta &= \left(\Gamma_1,\theta_1,\Gamma_2,\theta_2,\dots,\Gamma_n,\theta_n\right),
	\end{align*}
	where for every $k \in \{1, 2, \ldots, n\}$ we have  $\mathcal{I}(\theta_k) = d_{k-1}, \mathcal{O}(\theta_k) = d_{k}$, and $\Gamma_k\in\R^{d_{k}\times d_{k-1}}$.	We call $\theta= \left(\theta_1,\theta_2,\dots,\theta_n\right)\in \mathcal{N}^n$ the \textit{residual blocks} of $\Theta$.  Moreover, we denote by $\ResLayer,\ResPar \colon\ResNet\to \N$ the functions which satisfy $\ResLayer(\Theta) = n$,  $\ResPar\left(\Theta\right) = \sum_{i=1}^n\left(\mathcal{P}(\theta_i) + d_id_{i-1}\right)$ (cf.\ Definition \ref{def5.1}). We call $\ResLayer(\Theta)$ the length of $\Theta$ and $\ResPar(\Theta)$ the complexity of $\Theta$. Furthermore, we denote by $\ResDims \colon\ResNet \to \cup_{L\in\N}{\N^L}$ the function which satisfies $\ResDims(\Theta) = (d_0,d_1,\dots,d_{\ResLayer(\Theta)})$.
	\end{definition}
	
	\begin{definition}[Realization of ResNets]\label{ReaResNet}
		Let $a\in C(\R,\R)$. Then we denote by $\ResReal_a \colon\ResNet\to \cup_{u,v\in \N} C(\R^{u},\allowbreak \R^{v})$ the function which satisfies for all
		\begin{align*}
				\Theta = \left(\Gamma_1, \theta_1, \Gamma_2, \theta_2, \dots, \Gamma_n, \theta_n \right)  \in \ResNet
		\end{align*}
 and $x_0 \in \R^{\mathcal{I}(\theta_1)}$,  $x_1\in \R^{\mathcal{I}(\theta_2)} =  \R^{\mathcal{O}(\theta_1)}$, \dots, $x_n\in \R^{\mathcal{O}(\theta_n)}$ with 
	\begin{align}\label{h.i}
	x_i = \Gamma_i x_{i-1} + \mathcal{R}_a\theta_i\left(x_{i-1}\right)
	\end{align}
	for all $i\in \{1,2,\dots,n\}$, that $(\ResReal_a  \Theta)(x_0) = x_n$  (cf.\ Definitions \ref{def5.1}, \ref{def5.3}, and \ref{DefResNet}). We call $a \in C(\R,\R)$ the activation function and the function $\ResReal_a\Theta\in C(\R^{\mathcal{I}(\theta_1)},\R^{\mathcal{O}(\theta_n)})$ the realization of the ResNet $\Theta \in \ResNet$.
	\end{definition}
	Figure \ref{figRealizationResNet} illustrates the realization of a ResNet as introduced in Definition \ref{ReaResNet}.
	
	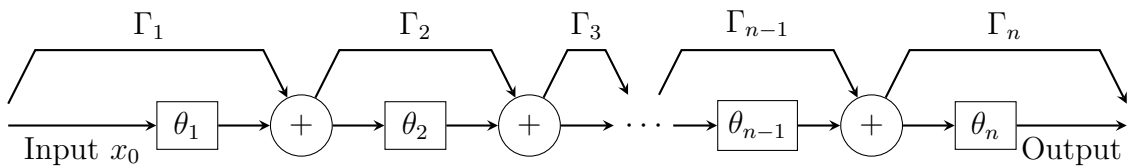
\begin{figure}[H]
		\centering
		\begin{tikzpicture}
		\tikzstyle{input} = [rectangle, minimum width=0.001cm, minimum height=0.8cm,text centered, draw=white]
		\tikzstyle{output} = [rectangle, minimum width=0.001cm, minimum height=0.8cm,text centered, draw=white]
		\tikzstyle{weights} = [rectangle, minimum width=0.8cm, minimum height=0.4cm,text centered, draw=black]
		\tikzstyle{bias} = [circle, minimum width=0.6cm, minimum height=0.4cm, text centered, draw=black]
		\tikzstyle{a} = [rectangle, minimum width=0.8cm, minimum height=0.4cm, text centered, draw=black]
		\tikzstyle{arrow} = [thick,->,>=stealth]
		
		\node (input) [input] {};
		\node (block1) [weights, right of=input, xshift=1.5cm] {$\theta_1$};
		\node (block2) [bias, right of=block1, xshift=0.5cm] {$+$};
		\node (block3) [weights, right of=block2, xshift=0.5cm] {$\theta_2$};
		\node (input2) [bias,right of=block3,xshift=0.5cm] {$+$};
		\node (dots)   [input,right of = input2,xshift=0.5cm]{$\cdots$};
		\node (blockn1) [weights, right of=dots, xshift=0.5cm] {$\theta_{n-1}$};
		\node (dots2)  [bias,right of=blockn1,xshift=0.5cm]{$+$};
		\node (blockn) [weights, right of=dots2, xshift=0.5cm] {$\theta_n$};
		\node (output) [output,right of=blockn,xshift=1cm] {};
		
		\draw [arrow] (input) --node[anchor = north]{Input $x_0$}(block1);
		\draw [arrow] (input) -- +(0.5,1) -- node[anchor= south,pos =0.5]{$\Gamma_1$} +(3.5,1) -- (block2);
		\draw [arrow] (block1) -- (block2);
		\draw [arrow] (block2) -- +(0.5,1) -- node[anchor= south,pos =0.5]{$\Gamma_2$} +(2.5,1) --(input2);
		\draw [arrow] (block2) -- (block3);
		\draw [arrow] (block3) -- (input2);
		\draw [arrow] (input2) -- (dots);
		\draw [arrow] (input2) -- +(0.5,1) -- node[anchor= south,pos =0.5]{$\Gamma_3$} +(1,1) -- (dots);
		\draw [arrow] (dots) -- (blockn1);
		\draw [arrow] (dots) -- +(0.5,1) -- node[anchor= south,pos =0.5]{$\Gamma_{n-1}$} +(2.5,1) --(dots2);
		\draw [arrow] (blockn1) -- (dots2);		
		\draw [arrow] (dots2) -- +(0.5,1) -- node[anchor= south,pos =0.5]{$\Gamma_{n}$} +(3,1) -- (output);
		\draw [arrow] (dots2) -- (blockn);
		\draw [arrow] (blockn) --node[anchor = north]{Output}(output);
		
		\end{tikzpicture}
		\caption{Realization of a ResNet.}
		\label{figRealizationResNet}
	\end{figure}
	Note that in our definition of a ResNet we only allow shortcuts over one FNN. If $d_0 = d_1 = \dots = d_n$ this definition  suits the Euler-Maruyama type approximation when setting the shortcut matrices $\Gamma_{i}$ to the identity matrices.
	
	\begin{definition}[Composition of ResNets]\label{defcompres}
			Let $n,m\in \N$ and let the ResNets $\Theta^1 = (\Gamma^1_1,\theta^1_1,\Gamma^1_2,\theta^1_2, \allowbreak \dots, \allowbreak \Gamma^1_n,\theta^1_n)$, $\Theta^2 = (\Gamma^2_1,\theta^2_1,\Gamma^2_2,\theta^2_2,\dots,\Gamma^2_m,\theta^2_m) \in \ResNet$ satisfy $\mathcal{O}(\theta^1_n) = \mathcal{I}(\theta^2_1)$ (cf.~Definitions~\ref{def5.1} and \ref{DefResNet}). Then we define  the composition of $\Theta^1$ and $\Theta^2$ by 
			\begin{align*}
				\Theta^2 \bullet \Theta^1 \colon= (\Gamma^1_1,\theta^1_1,\Gamma^1_2,\theta^1_2,\dots,\Gamma^1_n,\theta^1_n,\Gamma^2_1,\theta^2_1,\Gamma^2_2,\theta^2_2,\dots,\Gamma^2_m,\theta^2_m) \in \ResNet.
			\end{align*}
	\end{definition}
	
Note that under the condition that $\mathcal{O}(\theta^1_n)$, the output dimension of $\Theta^1$, is equal to $\mathcal{I}(\theta^2_1)$, the input dimension of $\Theta^2$, the composition $\Theta^2 \bullet \Theta^1$ is well defined and again a ResNet. The following result, \Cref{lemcomp} below,  shows the natural property that the realization of two composed ResNets is the composition of the realizations of these two ResNets.

	\begin{lemma}\label{lemcomp}
		Let $n,m\in \N$, $a\in C(\R,\R)$, and let  $\Theta^1 = (\Gamma^1_1,\theta^1_1,\Gamma^1_2,\theta^1_2, \allowbreak \dots, \allowbreak \Gamma^1_n,\theta^1_n)$, $\Theta^2 = (\Gamma^2_1,\theta^2_1,\Gamma^2_2,\theta^2_2,\dots,\Gamma^2_m,\theta^2_m) \in \ResNet$ satisfy $\mathcal{O}(\theta^1_n) = \mathcal{I}(\theta^2_1)$ (cf.~Definitions~\ref{def5.1} and \ref{DefResNet}). Then the following properties hold
		\begin{enumerate}[label = \roman*)]
			\item\label{itone} $\ResReal_a(\Theta^2 \bullet \Theta^1) = \ResReal_a(\Theta^2)\circ \ResReal_a(\Theta^1)$,
			\item\label{ittwo} $\ResPar(\Theta^2 \bullet \Theta^1) = \ResPar(\Theta^1) + \ResPar(\Theta^2)$
		\end{enumerate}
	(cf.~\Cref{ReaResNet,defcompres}).
	\end{lemma}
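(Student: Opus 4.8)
The plan is to unwind both assertions directly from Definitions \ref{DefResNet}, \ref{ReaResNet}, and \ref{defcompres}, using that $\Theta^2 \bullet \Theta^1$ is by construction the concatenation of the defining tuples of $\Theta^1$ and $\Theta^2$. Write $\ResDims(\Theta^1) = (d^1_0,d^1_1,\dots,d^1_n)$ and $\ResDims(\Theta^2) = (d^2_0,d^2_1,\dots,d^2_m)$, so that the hypothesis $\mathcal{O}(\theta^1_n) = \mathcal{I}(\theta^2_1)$ reads $d^1_n = d^2_0$. By Definition \ref{defcompres}, $\Theta^2\bullet\Theta^1$ is the ResNet with residual blocks $(\theta^1_1,\dots,\theta^1_n,\theta^2_1,\dots,\theta^2_m)$, shortcut matrices $(\Gamma^1_1,\dots,\Gamma^1_n,\Gamma^2_1,\dots,\Gamma^2_m)$, and dimension vector $(d^1_0,\dots,d^1_n,d^2_1,\dots,d^2_m)$. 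For \ref{ittwo} I simply substitute this into the defining formula $\ResPar(\cdot) = \sum_i (\mathcal{P}(\theta_i) + d_i d_{i-1})$: the first $n$ summands reproduce $\ResPar(\Theta^1)$, and the last $m$ summands reproduce $\ResPar(\Theta^2)$ --- here the summand at the seam is $\mathcal{P}(\theta^2_1) + d^2_1 d^1_n = \mathcal{P}(\theta^2_1) + d^2_1 d^2_0$ thanks to $d^1_n = d^2_0$ --- so that
\[
\ResPar(\Theta^2\bullet\Theta^1) = \textstyle\sum_{i=1}^{n}\bigl(\mathcal{P}(\theta^1_i)+d^1_i d^1_{i-1}\bigr) + \textstyle\sum_{j=1}^{m}\bigl(\mathcal{P}(\theta^2_j)+d^2_j d^2_{j-1}\bigr) = \ResPar(\Theta^1)+\ResPar(\Theta^2).
\]

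For \ref{itone} I would fix $x_0 \in \R^{\mathcal{I}(\theta^1_1)}$ and let $(x_i)_{i=0}^{n+m}$ be the sequence attached to $\Theta^2\bullet\Theta^1$ and $x_0$ by the recursion in Definition \ref{ReaResNet}, namely $x_i = \Gamma^1_i x_{i-1} + \mathcal{R}_a\theta^1_i(x_{i-1})$ for $i \in \{1,\dots,n\}$ and $x_{n+j} = \Gamma^2_j x_{n+j-1} + \mathcal{R}_a\theta^2_j(x_{n+j-1})$ for $j \in \{1,\dots,m\}$. The first $n$ of these equations are precisely the recursion defining $\ResReal_a(\Theta^1)$ evaluated at $x_0$, whence $x_n = (\ResReal_a\Theta^1)(x_0) \in \R^{\mathcal{O}(\theta^1_n)} = \R^{\mathcal{I}(\theta^2_1)}$; the remaining $m$ equations are then precisely the recursion defining $\ResReal_a(\Theta^2)$ evaluated at $x_n$, whence $x_{n+m} = (\ResReal_a\Theta^2)(x_n)$. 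Since Definition \ref{ReaResNet} gives $(\ResReal_a(\Theta^2\bullet\Theta^1))(x_0) = x_{n+m}$, we conclude $(\ResReal_a(\Theta^2\bullet\Theta^1))(x_0) = (\ResReal_a\Theta^2)((\ResReal_a\Theta^1)(x_0))$, and as $x_0$ was arbitrary this yields \ref{itone} together with $\ResReal_a(\Theta^2\bullet\Theta^1) \in C(\R^{\mathcal{I}(\theta^1_1)},\R^{\mathcal{O}(\theta^2_m)})$.

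I do not anticipate a genuine obstacle here: the statement is essentially definitional, and the only point requiring care is the index bookkeeping at the seam between the two tuples --- specifically, verifying that the dimension-matching hypothesis $\mathcal{O}(\theta^1_n) = \mathcal{I}(\theta^2_1)$ is exactly what licenses passing the intermediate state $x_n$ from the $\Theta^1$-recursion into the $\Theta^2$-recursion (for \ref{itone}) and what makes the junction summand $d^2_1 d^1_n$ coincide with $d^2_1 d^2_0$ (for \ref{ittwo}). Unlike the analogous statement for FNNs (Definition \ref{def5.5} and Lemma \ref{lem4.6}), no merging of adjacent layers occurs, which is precisely why the complexity is additive rather than potentially multiplicative.
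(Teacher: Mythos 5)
Your proposal is correct and follows essentially the same route as the paper's proof: both parts are unwound directly from Definitions \ref{ReaResNet} and \ref{defcompres} by concatenating the two recursions and splitting the complexity sum at the seam. Your extra care with the junction summand $d^2_1 d^1_n = d^2_1 d^2_0$ is a detail the paper leaves implicit but does not change the argument.
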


	\begin{proof}
		To show statement \ref{itone}, for arbitrary $x_0\in \R^{\mathcal{I}(\theta_1^1)}$ let us define $x_i  \colon= \Gamma^1_i x_{i-1} + (\mathcal{R}_a\theta^1_i)(x_{i-1})$ for all $i\in \{1,2,\dots,n\}$ and $x_j  \colon= \Gamma^2_{j-n} x_{j-1} + (\mathcal{R}_a\theta^2_{j-n})(x_{j-1})$ for all $j\in \{n+1,n+2,\dots,n+m\}$. Then
		\begin{align*}
	[	\ResReal_a(\Theta^2\bullet\Theta^1)](x_0) &= x_{n+m} = [\ResReal_a(\Theta^2)](x_n)= [\ResReal_a(\Theta^2)] [\mathscr{R}_a(\Theta^1)(x_0)]\\ &= [\ResReal_a(\Theta^2)\circ \ResReal_a(\Theta^1)](x_0).
		\end{align*}
		For statement \ref{ittwo} note that
		\begin{align*}
		\ResPar (\Theta^2 \bullet \Theta^1) &= \textstyle\sum\limits_{j=1}^m\left(\mathcal{P}(\theta_j^2) + d^2_jd^2_{j-1} \right)+ \textstyle\sum\limits_{i=1}^n\left(\mathcal{P}(\theta_i^1) + d^1_id^1_{i-1}\right)\\
		&= \ResPar(\Theta^1) + \ResPar(\Theta^2),
		\end{align*}
	where $d^1_0,d^1_1,d^1_2, \dots, d^1_n$, $d^2_0,d^2_1, \dots ,d^2_m\in\N$ satisfy $\forall \, i \in \{1, 2, \ldots, n\} \colon \mathcal{I}(\theta_i^1) = d^1_{i-1}, \mathcal{O}(\theta^1_i) = d^1_{i}$ and $\forall \, j \in \{1, 2, \ldots, m\} \colon \mathcal{I}(\theta_j^2) = d^2_{j-1}, \mathcal{O}(\theta^2_j) = d^2_{j}$.
		The proof of Lemma \ref{lemcomp} is completed.
	\end{proof}

Next we define  the composition of a ResNet with a standard FNN.

\begin{definition}[Composition of a ResNet with an FNN]\label{defcompFFREs}
	Let $n\in\N$, $\Theta = (\Gamma_1, \theta_1, \Gamma_2, \theta_2, \allowbreak \dots, \allowbreak \Gamma_n, \theta_n) \in \ResNet$, and $\phi\in\mathcal{N}$ with $\mathcal{I}(\phi) = \mathcal{O}(\theta_n)$ (cf.~Definitions~\ref{def5.1} and \ref{DefResNet}). Moreover, let $A\in \R^{\mathcal{O}(\phi)\times \mathcal{I}(\phi)}$ be the matrix with only zero entries. Then we define the composition of $\phi$ and $\Theta$ by
	\begin{align*}
	\phi \diamond \Theta  &\colon= (\Gamma_1,\theta_1,\Gamma_2,\theta_2,\dots,\Gamma_n,\theta_n,A,\phi).
	\end{align*}
\end{definition}

Observe that under the assumption that $\mathcal{O}(\theta_n)$, the output dimension of $\Theta$, is equal to $\mathcal{I}(\phi)$, the input dimension of $\phi$, the composition $\phi \diamond \Theta$ is well defined and again a ResNet. 
Similarly, to \Cref{lemcomp} above, the next result demonstrates that the realization of the composition of a ResNet with an FNN is the composition of the realizations of this ResNet and FNN.

\begin{lemma}\label{lemcompFFResnew}
	Let $n\in\N$, $a\in C(\R,\R)$, $\Theta = (\Gamma_1,\theta_1,\Gamma_2,\theta_2,\dots,\Gamma_n,\theta_n)\in \ResNet$, and $\phi\in\mathcal{N}$ with $\mathcal{I}(\phi) = \mathcal{O}(\theta_n)$ (cf.~Definitions~\ref{def5.1} and \ref{DefResNet}).  Then the following two properties hold
	\begin{enumerate}[label =\roman*)]
		\item $\ResReal_a(\phi \diamond \Theta) = \left[\mathcal{R}_a(\phi)\right]\circ \left[\ResReal_a(\Theta)\right]$,
		\item $\ResPar(\phi \diamond \Theta) = \ResPar(\Theta) + \mathcal{P}(\phi) + \mathcal{I}(\phi)\mathcal{O}(\phi)$
	\end{enumerate}
(cf.~Definitions \ref{ReaResNet}, \ref{defcompFFREs}, and \ref{def5.3}).
\end{lemma}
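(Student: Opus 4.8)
The plan is to mirror the argument used for \Cref{lemcomp}, exploiting that the only new ingredient in $\phi\diamond\Theta$ is a final residual block $\phi$ whose shortcut matrix $A$ is identically zero, so that this last block contributes no skip term and acts exactly like an ordinary application of $\mathcal{R}_a\phi$ on top of $\ResReal_a(\Theta)$.

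For statement (i), I would fix an arbitrary input $x_0\in\R^{\mathcal{I}(\theta_1)}$ and define $x_i  \colon= \Gamma_i x_{i-1} + (\mathcal{R}_a\theta_i)(x_{i-1})$ for $i\in\{1,2,\dots,n\}$, so that by \Cref{ReaResNet} applied to $\Theta$ we have $[\ResReal_a(\Theta)](x_0) = x_n$. Applying \Cref{ReaResNet} to $\phi\diamond\Theta = (\Gamma_1,\theta_1,\dots,\Gamma_n,\theta_n,A,\phi)$ just appends one further step $x_{n+1} \colon= A x_n + (\mathcal{R}_a\phi)(x_n)$. Since $A$ is the zero matrix, $A x_n = 0$, whence $x_{n+1} = (\mathcal{R}_a\phi)(x_n) = (\mathcal{R}_a\phi)([\ResReal_a(\Theta)](x_0))$. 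The hypothesis $\mathcal{I}(\phi) = \mathcal{O}(\theta_n)$ makes all dimensions line up, so $\ResReal_a(\phi\diamond\Theta)\in C(\R^{\mathcal{I}(\theta_1)},\R^{\mathcal{O}(\phi)})$ and $[\ResReal_a(\phi\diamond\Theta)](x_0) = x_{n+1} = \bigl([\mathcal{R}_a(\phi)]\circ[\ResReal_a(\Theta)]\bigr)(x_0)$; since $x_0$ was arbitrary, (i) follows.

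For statement (ii), I would simply unfold the definition of $\ResPar$ from \Cref{DefResNet} for $\phi\diamond\Theta$, whose length is $n+1$, whose residual blocks are $(\theta_1,\dots,\theta_n,\phi)$, whose shortcut matrices are $(\Gamma_1,\dots,\Gamma_n,A)$ with $A\in\R^{\mathcal{O}(\phi)\times\mathcal{I}(\phi)}$, and whose layer dimensions are $(d_0,d_1,\dots,d_n,\mathcal{O}(\phi))$. This gives $\ResPar(\phi\diamond\Theta) = \sum_{i=1}^n\bigl(\mathcal{P}(\theta_i) + d_id_{i-1}\bigr) + \mathcal{P}(\phi) + \mathcal{O}(\phi)\mathcal{I}(\phi) = \ResPar(\Theta) + \mathcal{P}(\phi) + \mathcal{I}(\phi)\mathcal{O}(\phi)$, where I used $\mathcal{I}(\phi) = \mathcal{O}(\theta_n) = d_n$. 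I do not expect any genuine obstacle: the content is pure bookkeeping, and the only point requiring care is the observation that the zero shortcut matrix $A$ is exactly what turns the last block into a plain FNN composition rather than a residual step — everything else is a direct consequence of \Cref{ReaResNet} and \Cref{DefResNet}, just as in the proof of \Cref{lemcomp}.
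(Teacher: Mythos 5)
Your proposal is correct and amounts to the same argument as the paper's: the paper simply observes that $\phi \diamond \Theta = (A,\phi)\bullet \Theta$ with $A$ the zero matrix and invokes \Cref{lemcomp}, whereas you inline that one extra residual step and the corresponding complexity bookkeeping directly. Both routes hinge on the identical observation that the zero shortcut matrix reduces the final block to a plain application of $\mathcal{R}_a\phi$, so there is nothing to add.
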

\begin{proof}
Let $A\in \R^{\mathcal{O}(\phi)\times \mathcal{I}(\phi)}$ be the matrix with only zero entries.
	The statements follow directly from Lemma \ref{lemcomp} and using the fact $\phi \diamond \Theta = (A,\phi)\bullet \Theta$ (cf.~\Cref{defcompres}).
\end{proof}

The above Definition \ref{defcompFFREs} and Lemma \ref{lemcompFFResnew} are important to connect FNNs to our theory about ResNets. Setting the shortcut connection to zero allows us to compose ResNets with FNNs and obtain the natural property that the realization of the composition of a ResNet and an FNN is the composition of the realization of a ResNet and an FNN.

The next definition presents the parallelization of ResNets analogous to Definition \ref{def5.7}.
	\begin{definition}[Parallelization of ResNets with same architecture]\label{DefParResnets}
		Let $u\in \N$ and $\Theta^j  \colon= (\Gamma^j_1,\theta^j_1,\Gamma^j_2,\theta^j_2,\dots, \Gamma^j_n,\theta^j_n) \in \ResNet$ for all $j\in\{1,2,\dots,u\}$ and some $n\in\N$ (cf.~Definition  \ref{DefResNet}).  Moreover, assume that $\mathcal{D}(\theta_i^1)= \dots = \mathcal{D}(\theta_i^u)$ for all $i\in \{1,2,\dots,n\} $ (cf.~Definition~\ref{def5.1}). Then we define 
		\begin{align*}
			\ResParal_u(\Theta^1,\Theta^2,\dots,\Theta^u)  \colon=
			\bBigg@{4}(&\begin{pmatrix} 
			\Gamma_1^1 & 0 & \dots & 0 \\
			0 & \Gamma_1^2 & \dots & 0\\
			\vdots & \vdots & \ddots & \vdots\\
			0  & 0 & \dots & \Gamma_1^u 
			\end{pmatrix},
			\mathbf{P}_u(\theta_1^1,\theta_1^2,\dots,\theta_1^u),\\
			&\begin{pmatrix} 
			\Gamma_2^1 & 0 & \dots & 0 \\
			0 & \Gamma_2^2 & \dots & 0\\
			\vdots & \vdots & \ddots & \vdots\\
			0  & 0 & \dots & \Gamma_2^u 
			\end{pmatrix},
			\mathbf{P}_u(\theta_2^1,\theta_2^2,\dots,\theta_2^u),
			\dots,\\
			&\begin{pmatrix} 
			\Gamma_n^1 & 0 & \dots & 0 \\
			0 & \Gamma_n^2 & \dots & 0\\
			\vdots & \vdots & \ddots & \vdots\\
			0  & 0 & \dots & \Gamma_n^u 
			\end{pmatrix},
			\mathbf{P}_u(\theta_n^1,\theta_n^2,\dots,\theta_n^u)\bBigg@{4})
		\end{align*}
		(cf.\ Definition \ref{def5.7}).
	\end{definition}

	Note that the above condition $\forall \, i\in \{1,2,\dots,n\} \colon \mathcal{D}(\theta_i^1)= \mathcal{D}(\theta_i^2) = \dots = \mathcal{D}(\theta_i^u)$ ensures that for $\ResDims(\Theta^1) = (d_0,d_1,\dots,d_{n})$ we have, for all $ i\in \{1,2,\dots, n\}$,
	\begin{align*}
	\begin{pmatrix} 
	\Gamma_i^1 & 0 & \dots & 0 \\
	0 & \Gamma_i^2 & \dots & 0\\
	\vdots & \vdots & \ddots & \vdots\\
	0  & 0 & \dots & \Gamma_i^u 
	\end{pmatrix} \in \R^{ud_i\times ud_{i-1}}
	\end{align*}
	and, by Lemma \ref{lem4.10}, we get $\mathcal{R}_a ( \mathbf{P}_u(\theta^1_i,\dots,\theta^u_i))\in C(\R^{ud_{i-1}},\R^{ud_i})$. Thus $\ResParal_u$ is well defined and is a ResNet. 
	
	The next lemma shows a natural property that the realization of a parallelized ResNets is equal to the vector of the realizations of these ResNets.
	
	\begin{lemma}\label{ResNetParallel}
		Let $n,u\in\N$,  $a\in C(\R,\R)$, let $\Theta^j  \colon= (\Gamma^j_1,\theta^j_1,\Gamma^j_2,\theta^j_2, \allowbreak\dots, \allowbreak \Gamma^j_n,\theta^j_n) \in \ResNet$, $j\in\{1,2,\dots,u\}$, satisfy  $\mathcal{D}(\theta_i^1) = \mathcal{D}(\theta_i^2) = \dots = \mathcal{D}(\theta_i^u)$ for all $i\in \{1,2,\dots,n\}$, and let $x_0^1,x_0^2, \allowbreak \dots, \allowbreak x_0^u\in \R^{\mathcal{I}(\theta_1^1)}$ (cf.~Definitions~\ref{def5.1} and \ref{DefResNet}). Then
		\begin{enumerate}[label = \roman*)]
			\item\label{ittone} $[\ResReal_a(\ResParal_u(\Theta^1,\Theta^2,\dots,\Theta^u))](x_0^1,x_0^2,\dots,x_0^u) = ((\ResReal_a\Theta^1)(x_0^1),(\ResReal_a\Theta^2)(x_0^2),\dots,(\ResReal_a\Theta^u)(x_0^u))$,
			\item\label{itttwo} $\ResPar(\ResParal_u(\Theta^1,\Theta^2,\dots,\Theta^u)) \leqslant u^2\ResPar(\Theta^1)$
		\end{enumerate}
	(cf.~Definitions \ref{ReaResNet} and \ref{DefParResnets}).
	\end{lemma}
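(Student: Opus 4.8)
The plan is to prove both statements by unwinding the definition of $\ResParal_u$ and then reducing everything to the already-established lemmas on FNNs, in particular \Cref{lem4.10} (parallelization of FNNs) and the complexity bound it yields. The key observation is that a parallelized ResNet is built layerwise: its $i$-th shortcut matrix is the block-diagonal matrix $\operatorname{diag}(\Gamma_i^1,\dots,\Gamma_i^u)$ and its $i$-th residual block is $\mathbf{P}_u(\theta_i^1,\dots,\theta_i^u)$. So the dynamics \eqref{h.i} for $\ResParal_u(\Theta^1,\dots,\Theta^u)$ decouple coordinate-blockwise into $u$ independent copies of the dynamics for each $\Theta^j$.

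For statement \ref{ittone}, I would proceed by induction on the length $n$. Fix $x_0^1,\dots,x_0^u \in \R^{\mathcal{I}(\theta_1^1)}$ and, for each $j$, let $(x_i^j)_{i=0}^n$ be the sequence defined by $x_i^j = \Gamma_i^j x_{i-1}^j + (\mathcal{R}_a\theta_i^j)(x_{i-1}^j)$ as in \Cref{ReaResNet}, so that $(\ResReal_a\Theta^j)(x_0^j) = x_n^j$. Now consider the sequence $(y_i)_{i=0}^n$ associated to $\ResParal_u(\Theta^1,\dots,\Theta^u)$ with input $y_0 = (x_0^1,\dots,x_0^u)$. The claim is that $y_i = (x_i^1,\dots,x_i^u)$ for all $i$; this is immediate from the block-diagonal structure of the shortcut matrices together with \Cref{lem4.10}\,ii), which gives $\mathcal{R}_a(\mathbf{P}_u(\theta_i^1,\dots,\theta_i^u))(x_{i-1}^1,\dots,x_{i-1}^u) = ((\mathcal{R}_a\theta_i^1)(x_{i-1}^1),\dots,(\mathcal{R}_a\theta_i^u)(x_{i-1}^u))$. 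Evaluating at $i=n$ yields $y_n = (x_n^1,\dots,x_n^u) = ((\ResReal_a\Theta^1)(x_0^1),\dots,(\ResReal_a\Theta^u)(x_0^u))$, which is exactly \ref{ittone}.

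For statement \ref{itttwo}, I would compute $\ResPar(\ResParal_u(\Theta^1,\dots,\Theta^u))$ directly from the definition $\ResPar(\Theta) = \sum_{i=1}^n(\mathcal{P}(\theta_i) + d_id_{i-1})$. The $i$-th residual block of the parallelization is $\mathbf{P}_u(\theta_i^1,\dots,\theta_i^u)$; by \Cref{lem4.10}\,i) its architecture is the coordinatewise sum of the architectures of $\theta_i^1,\dots,\theta_i^u$, and since these all have the same architecture (by the hypothesis $\mathcal{D}(\theta_i^1)=\dots=\mathcal{D}(\theta_i^u)$), the layer sizes get multiplied by $u$, so $\mathcal{P}(\mathbf{P}_u(\theta_i^1,\dots,\theta_i^u)) = \sum_{k=1}^{L_i} (u\,l^i_k)(u\,l^i_{k-1}+1) \le u^2 \sum_{k=1}^{L_i} l^i_k(l^i_{k-1}+1) = u^2\mathcal{P}(\theta_i^1)$. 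Similarly the $i$-th shortcut matrix lives in $\R^{ud_i\times ud_{i-1}}$, contributing $u^2 d_id_{i-1}$. Summing over $i$ gives $\ResPar(\ResParal_u(\Theta^1,\dots,\Theta^u)) \le u^2\sum_{i=1}^n(\mathcal{P}(\theta_i^1)+d_id_{i-1}) = u^2\ResPar(\Theta^1)$.

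The only mild subtlety — and the closest thing to an obstacle — is the bookkeeping in the complexity bound: the $+1$ (bias) term in the factor $(u\,l^i_{k-1}+1)$ is not scaled by $u$, so one gets $u\,l^i_k(u\,l^i_{k-1}+1) = u^2 l^i_k l^i_{k-1} + u\,l^i_k \le u^2(l^i_k l^i_{k-1} + l^i_k) = u^2 l^i_k(l^i_{k-1}+1)$, which is where the inequality (rather than equality) in \ref{itttwo} comes from. Everything else is a routine verification; no deep idea is needed beyond recognizing that parallelization of ResNets is just layerwise parallelization of the residual blocks plus block-diagonal stacking of the shortcuts.
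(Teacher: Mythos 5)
Your proposal is correct and follows essentially the same route as the paper's proof: an inductive blockwise identification of the parallelized dynamics via Lemma \ref{lem4.10}\,ii) for the realization, and the layer-size count $\mathcal{P}(\mathbf{P}_u(\theta_i^1,\dots,\theta_i^u))\leq u^2\mathcal{P}(\theta_i^1)$ plus $u^2d_id_{i-1}$ per shortcut for the complexity. Your remark on the bias term explaining why the bound is an inequality is exactly the bookkeeping the paper carries out explicitly in \Cref{lemparallelResNets}.
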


	\begin{proof}
		Let $d_0,d_1,d_2,\dots,d_n\in\N$ satisfy for all $j\in\{1,2,\dots,u\}$ that $\ResDims(\Theta^j) = (d_0,d_1,d_2,\dots,d_n)$ and let
		 $z_0 \in \R^{ud_0}$, $z_i\in \R^{ud_i}$, $x_i^j\in\R^{d_i}$ be given for all $i\in \{1,2,\dots,n\},j\in\{1,2,\dots,u\}$ by 
		\begin{align*}
			z_0 & \colon= \begin{pmatrix}
						x_0^1\\x_0^2\\\vdots\\x_0^u
					\end{pmatrix},\\ 
					z_i & \colon= \begin{pmatrix} 
					\Gamma_i^1 & 0 & \dots & 0 \\
					0 & \Gamma_i^2 & \dots & 0\\
					\vdots & \vdots & \ddots & \vdots\\
					0  & 0 & \dots & \Gamma_i^u 
					\end{pmatrix} 
					z_{i-1} + [\mathcal{R}_a(\mathbf{P}_u(\theta_i^1,\theta_i^2,\dots,\theta_i^u))](z_{i-1}),\\
					x_i^j & \colon= \Gamma^j_i x^j_{i-1} + (\mathcal{R}_a\theta^j_i)(x^j_{i-1})
		\end{align*}
		(cf.\ Definitions \ref{def5.3} and \ref{def5.7}).
We observe that it is enough to show $z_i = (x_i^1,x_i^2,\dots,x_i^u)$ for all $i\in\{1,2,\dots,n\}$. Applying Lemma \ref{lem4.10} to every parallelized residual block yields inductively
		\begin{align*}
				z_i &= \begin{pmatrix} 
			\Gamma_i^1 & 0 & \dots & 0 \\
			0 & \Gamma_i^2 & \dots & 0\\
			\vdots & \vdots & \ddots & \vdots\\
			0  & 0 & \dots & \Gamma_i^u 
			\end{pmatrix} \!
			\begin{pmatrix} x_{i-1}^1\\x_{i-1}^2\\\vdots\\x_{i-1}^u
			\end{pmatrix} + [\mathcal{R}_a(\mathbf{P}_u(\theta_i^1,\theta_i^2,\dots,\theta_i^u))]\!\begin{pmatrix} x_{i-1}^1\\x_{i-1}^2\\\vdots\\x_{i-1}^u
			\end{pmatrix}\\
			&= \begin{pmatrix} 
			\Gamma_i^1x_{i-1}^1\\
			\Gamma_i^2x_{i-1}^2\\
			\vdots\\
			\Gamma_i^ux_{i-1}^u 
			\end{pmatrix} 
			+  \begin{pmatrix}(\mathcal{R}_a\theta_i^1)(x_{i-1}^1)\\
			(\mathcal{R}_a\theta_i^2)(x_{i-1}^2)\\
			\vdots\\
			(\mathcal{R}_a\theta_i^u)(x_{i-1}^u)
		\end{pmatrix} = \begin{pmatrix} x_{i}^1\\x_{i}^2\\\vdots\\x_{i}^u
			\end{pmatrix}.
		\end{align*}
		For statement \ref{itttwo} we use again Lemma \ref{lem4.10} and obtain
		\begin{align*}
			\ResPar(\ResParal_u(\Theta^1,\Theta^2,\dots,\Theta^u)) &= \textstyle\sum\limits_{i=1}^n \left[\mathcal{P}(\mathbf{P}_u(\theta_i^1,\theta_i^2,\dots,\theta_i^u)) + ud_iud_{i-1} \right]\\ &\leqslant u^2\bigg(\textstyle\sum\limits_{i=1}^n \left[\mathcal{P}(\theta_i^1) + d_id_{i-1} \right]\bigg) = u^2\ResPar(\Theta^1).
		\end{align*}
	\end{proof}

	A simple consequence of Lemma \ref{ResNetParallel} is the next result, \Cref{lemparallelResNets} below, about the sum of ResNets. In Section \ref{ResNetPDE}, \Cref{lemparallelResNets}  allows us to construct  ResNets representing   Monte Carlo type approximations.

	\begin{lemma}\label{lemparallelResNets}
		Let $a\in C(\R,\R)$, $u,n\in\N$, $h_1,h_2, \dots,h_u\in\R$ and let $\Theta^j  \colon= (\Gamma^j_1,\theta^j_1,\Gamma^j_2,\theta^j_2, \allowbreak \dots, \allowbreak \Gamma^j_n,\theta^j_n)\in\ResNet$, $j\in\{1,2,\dots,u\}$, satisfy $\mathcal{D}(\theta_i^1)= \mathcal{D}(\theta_i^2) = \dots = \mathcal{D}(\theta_i^u)$ for all $i\in \{1,2,\dots,n\}$, i.e., all the $i$-th residual blocks have the same architecture (cf.~Definitions~\ref{def5.1} and \ref{DefResNet}). Then there exists a ResNet $\psi\in\ResNet$ such that for all $x_0\in \R^{\mathcal{I}(\theta_1^1)}$
		\begin{align*}
		(	\ResReal_a\psi)(x_0) = \textstyle\sum\limits_{j=1}^u h_j [(\ResReal_a\Theta^j)(x_0) ]
		\end{align*}
		and $\ResPar(\psi) \leqslant u^2 \ResPar(\Theta^1)$ (cf.~Definition \ref{ReaResNet}).
	\end{lemma}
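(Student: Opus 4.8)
The plan is to obtain $\psi$ from the parallelization $\ResParal_u(\Theta^1,\dots,\Theta^u)$ by glueing onto its two ends the linear maps that duplicate the input and that form the weighted sum of the outputs. Set $\Xi\colon=\ResParal_u(\Theta^1,\dots,\Theta^u)$, which is a well-defined ResNet by the architecture hypothesis and the discussion following \Cref{DefParResnets}. Writing $\ResDims(\Theta^1)=(d_0,d_1,\dots,d_n)$ (this tuple is the same for all $\Theta^j$ since $\mathcal{D}(\theta_i^1)=\dots=\mathcal{D}(\theta_i^u)$ for every $i$), we have $\ResDims(\Xi)=(ud_0,ud_1,\dots,ud_n)$, and by \Cref{ResNetParallel} it holds that $\ResPar(\Xi)\leqslant u^2\ResPar(\Theta^1)$ and $(\ResReal_a\Xi)(y^1,\dots,y^u)=((\ResReal_a\Theta^1)(y^1),\dots,(\ResReal_a\Theta^u)(y^u))$ for all $y^1,\dots,y^u\in\R^{d_0}$. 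It thus suffices to feed the same $x_0$ into every coordinate block of $\Xi$ and then combine the outputs linearly with coefficients $h_1,\dots,h_u$.

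Concretely, write $\Xi=(\hat\Gamma_1,\hat\theta_1,\dots,\hat\Gamma_n,\hat\theta_n)$, where $\hat\theta_i=\mathbf{P}_u(\theta_i^1,\dots,\theta_i^u)$ and $\hat\Gamma_i\in\R^{ud_i\times ud_{i-1}}$ is the block-diagonal matrix with diagonal blocks $\Gamma_i^1,\dots,\Gamma_i^u$. Let $\mathbb{W}\in\R^{ud_0\times d_0}$ be the matrix obtained by stacking $u$ copies of $\Id_{d_0}$, so that $\mathbb{W}x_0=(x_0,x_0,\dots,x_0)$, and let $\mathcal{W}\in\R^{d_n\times ud_n}$ be the matrix obtained by placing $h_1\Id_{d_n},\dots,h_u\Id_{d_n}$ side by side, so that $\mathcal{W}(y^1,\dots,y^u)=\sum_{j=1}^u h_jy^j$. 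Using the matrix-multiplication operation of \Cref{def5.9}, define
\[
\psi\colon=\bigl(\hat\Gamma_1\mathbb{W},\ \hat\theta_1\circledast\mathbb{W},\ \hat\Gamma_2,\hat\theta_2,\dots,\hat\Gamma_{n-1},\hat\theta_{n-1},\ \mathcal{W}\hat\Gamma_n,\ \mathcal{W}\circledast\hat\theta_n\bigr)
\]
(read as $\psi=(\mathcal{W}\hat\Gamma_1\mathbb{W},\ \mathcal{W}\circledast(\hat\theta_1\circledast\mathbb{W}))$ when $n=1$). From \Cref{def5.9} and \Cref{lem4.12} one reads off that $\hat\theta_1\circledast\mathbb{W}\in\mathcal{N}$ has input dimension $d_0$ and output dimension $ud_1$, that $\mathcal{W}\circledast\hat\theta_n\in\mathcal{N}$ has input dimension $ud_{n-1}$ and output dimension $d_n$, and that $\hat\Gamma_1\mathbb{W}\in\R^{ud_1\times d_0}$ and $\mathcal{W}\hat\Gamma_n\in\R^{d_n\times ud_{n-1}}$ have the matching sizes; hence $\psi\in\ResNet$ with $\ResDims(\psi)=(d_0,ud_1,\dots,ud_{n-1},d_n)$.

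For the realization, fix $x_0\in\R^{d_0}$ and let $z_0,z_1,\dots,z_n$ be the hidden states of $\psi$ as in \Cref{ReaResNet}. By \Cref{lem4.12} (applied to $\circledast\mathbb{W}$) the first step gives $z_1=\hat\Gamma_1(\mathbb{W}x_0)+(\mathcal{R}_a\hat\theta_1)(\mathbb{W}x_0)$, which is exactly the first hidden state of $\Xi$ on input $\mathbb{W}x_0$; the blocks of $\psi$ with indices $2,\dots,n-1$ are identical to those of $\Xi$; and by \Cref{lem4.12} (applied to $\mathcal{W}\circledast$) the last step yields $z_n=\mathcal{W}\hat\Gamma_n z_{n-1}+\mathcal{W}(\mathcal{R}_a\hat\theta_n)(z_{n-1})=\mathcal{W}\bigl(\hat\Gamma_n z_{n-1}+(\mathcal{R}_a\hat\theta_n)(z_{n-1})\bigr)$. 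Therefore $(\ResReal_a\psi)(x_0)=\mathcal{W}\bigl[(\ResReal_a\Xi)(\mathbb{W}x_0)\bigr]$, and since $\mathbb{W}x_0=(x_0,\dots,x_0)$, \Cref{ResNetParallel} gives $(\ResReal_a\Xi)(\mathbb{W}x_0)=((\ResReal_a\Theta^1)(x_0),\dots,(\ResReal_a\Theta^u)(x_0))$, so $(\ResReal_a\psi)(x_0)=\sum_{j=1}^u h_j\,[(\ResReal_a\Theta^j)(x_0)]$, as desired. For the complexity bound I would compare $\ResPar(\psi)$ with $\ResPar(\Xi)$ block by block: the blocks with indices $2,\dots,n-1$ contribute the same; for the first block $\circledast\mathbb{W}$ only lowers the input dimension of $\hat\theta_1$ from $ud_0$ to $d_0$ while leaving all other layer sizes unchanged, so $\mathcal{P}(\hat\theta_1\circledast\mathbb{W})\leqslant\mathcal{P}(\hat\theta_1)$, and $\hat\Gamma_1\mathbb{W}\in\R^{ud_1\times d_0}$ contributes $ud_1d_0\leqslant u^2d_1d_0$; symmetrically $\mathcal{P}(\mathcal{W}\circledast\hat\theta_n)\leqslant\mathcal{P}(\hat\theta_n)$ and $\mathcal{W}\hat\Gamma_n$ contributes $d_nud_{n-1}\leqslant u^2d_nd_{n-1}$; hence $\ResPar(\psi)\leqslant\ResPar(\Xi)\leqslant u^2\ResPar(\Theta^1)$ (the case $n=1$ combines these estimates into a single block). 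The only mildly delicate parts are the dimension bookkeeping needed to confirm $\psi\in\ResNet$, the degenerate case $n=1$, and the verification that absorbing $\mathbb{W}$ and $\mathcal{W}$ into the end blocks does not increase the complexity; none of these is a genuine obstacle.
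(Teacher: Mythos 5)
Your construction is exactly the paper's: the same parallelization $\mathbf{P}_u$ of the residual blocks with block-diagonal shortcut matrices, the same stacking matrix $\mathbb{W}$ (the paper's $A_1$) absorbed into the first block via $\circledast$, and the same weighted-concatenation matrix $\mathcal{W}$ (the paper's $A_2$) absorbed into the last block, with the realization identity verified through \Cref{lem4.10,lem4.12} in the same way. Your complexity bound via the monotonicity of $\mathcal{P}$ under $\circledast$ and \Cref{ResNetParallel}\,\ref{itttwo} is a slightly cleaner packaging of the paper's explicit layer-by-layer count (and you are right to flag the $n=1$ case, which the paper glosses over), but the argument is essentially identical.
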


	\begin{proof}
		Let $d_0,d_1,d_2,\dots,d_n\in\N$ satisfy for all $j\in\{1,2,\dots,u\}$ that $\ResDims(\Theta^j) = (d_0,d_1,d_2,\dots,d_n)$ and set
		\begin{align*}
			\psi  \colon= \bBigg@{4}( &\begin{pmatrix} 
			\Gamma_1^1 & 0 & \dots & 0 \\
			0 & \Gamma_1^2 & \dots & 0\\
			\vdots & \vdots & \ddots & \vdots\\
			0  & 0 & \dots & \Gamma_1^u 
			\end{pmatrix}A_1,
			\mathbf{P}_u(\theta_1^1,\theta_1^2,\dots,\theta_1^u)\circledast A_1,\\
			&\begin{pmatrix} 
			\Gamma_2^1 & 0 & \dots & 0 \\
			0 & \Gamma_2^2 & \dots & 0\\
			\vdots & \vdots & \ddots & \vdots\\
			0  & 0 & \dots & \Gamma_2^u 
			\end{pmatrix},
			\mathbf{P}_u(\theta_2^1,\theta_2^2,\dots,\theta_2^u),
			\dots,\\
			&\left. A_2\begin{pmatrix} 
			\Gamma_n^1 & 0 & \dots & 0 \\
			0 & \Gamma_n^2 & \dots & 0\\
			\vdots & \vdots & \ddots & \vdots\\
			0  & 0 & \dots & \Gamma_n^u 
			\end{pmatrix}, A_2\circledast \mathbf{P}_u(\theta_n^1,\theta_n^2,\dots,\theta_n^u)\right),
		\end{align*}
		where $A_1\in\R^{ud_0\times d_0}$ and $A_2\in\R^{d_n\times ud_n}$ satisfy
		\begin{align*}
			A_1 = \begin{pmatrix}
						\Id_{d_0}\\
						\vdots\\
						\Id_{d_0}
				\end{pmatrix},\
					A_2 = \begin{pmatrix}
				h_1\Id_{d_n}&
				\dots&
				h_u\Id_{d_n}
				\end{pmatrix}
		\end{align*}
		(cf.\ Notation~\ref{def5.8} and Definitions \ref{def5.7} and \ref{def5.9}).
Moreover, let us set
		\begin{align*}
			z_0 & \colon= x_0,\\
			z_1 & \colon= \begin{pmatrix} 
			\Gamma_{1}^1 & 0 & \dots & 0 \\
			0 & \Gamma_1^2 & \dots & 0\\
			\vdots & \vdots & \ddots & \vdots\\
			0  & 0 & \dots & \Gamma_1^u 
			\end{pmatrix}A_1z_{0} +  [\mathcal{R}_a(\mathbf{P}_u(\theta_1^1,\theta_1^2,\dots,\theta_1^u))](A_1z_{0})\\
			&= \begin{pmatrix} 
			\Gamma_{1}^1 & 0 & \dots & 0 \\
			0 & \Gamma_1^2 & \dots & 0\\
			\vdots & \vdots & \ddots & \vdots\\
			0  & 0 & \dots & \Gamma_1^u 
			\end{pmatrix}\!\begin{pmatrix}
			x_0\\
			x_0\\
			\vdots\\
			x_0
			\end{pmatrix} +  [\mathcal{R}_a(\mathbf{P}_u(\theta_1^1,\theta_1^2,\dots,\theta_1^u))]\begin{pmatrix}
			x_0\\
			x_0\\
			\vdots\\
			x_0
			\end{pmatrix},\\
			z_i & \colon= \begin{pmatrix} 
			\Gamma_{i}^1 & 0 & \dots & 0 \\
			0 & \Gamma_i^2 & \dots & 0\\
			\vdots & \vdots & \ddots & \vdots\\
			0  & 0 & \dots & \Gamma_i^u 
			\end{pmatrix}z_{i-1} +  [\mathcal{R}_a(\mathbf{P}_u(\theta_i^1,\theta_i^2,\dots,\theta_i^u))](z_{i-1})
		\end{align*}
		 for all $i\in\{2,\dots,n-1\}$. Let $x_i^j\in\R^{d_i}$ with $x_i^j = \Gamma_i^j x^j_{i-1} + (\mathcal{R}_a\theta_i^j)(x^j_{i-1})$ for all $i\in\{1,2,\dots,n\}$, $j\in\{1,2,\dots,u\}$ and thus, by Lemma \ref{lem4.10} and Lemma \ref{lem4.12},
		\begin{align*}
			(\ResReal_a\psi)(x_0) &= A_2\begin{pmatrix} 
			\Gamma_n^1 & 0 & \dots & 0 \\
			0 & \Gamma_n^2 & \dots & 0\\
			\vdots & \vdots & \ddots & \vdots\\
			0  & 0 & \dots & \Gamma_n^u 
			\end{pmatrix}z_{n-1} + A_2 [\mathcal{R}_a(\mathbf{P}_u(\theta_n^1,\theta_n^2,\dots,\theta_n^u))(z_{n-1})]\\
			&= \bigg[ \textstyle\sum\limits_{j=1}^u h_j\Gamma_n^j x^j_{n-1} \bigg]+ \bigg[\textstyle\sum\limits_{j=1}^u h_j[(\mathcal{R}_a\theta_n^j)(x^j_{n-1}) ] \bigg]\\
			&= \textstyle\sum\limits_{j=1}^u h_j [\Gamma_n^j x^j_{n-1} + (\mathcal{R}_a\theta_n^j)(x^j_{n-1})]
			= \textstyle\sum\limits_{j=1}^u h_j  [(\ResReal_a \Theta^j)(x_0) ],
		\end{align*}
		where we used $(\ResReal_a \Theta^j)(x_0) = \Gamma_n^j x^j_{n-1} + (\mathcal{R}_a\theta_n^j)(x^j_{n-1})$ for all $j\in\{1,2,\cdots,u\}$. Let $\mathcal{D}(\theta_i^1)= \mathcal{D}(\theta_i^2) = \dots = \mathcal{D}(\theta_i^u) = (d_{i-1},l^i_1,l^i_2\dots,d_i)\in \N^{\mathcal{L}(\theta_i^1)}$ for all $i\in \{1,2,\dots,n\}$.
	Using Lemma \ref{lem4.10} yields that
			\begin{align*}
				\ResPar(\psi) &= \mathcal{P}(\mathbf{P}_u(\theta_1^1,\theta_1^2,\dots,\theta_1^u)\circledast A_1) + ud_1d_0 + \textstyle\sum\limits_{i=2}^{n-1} \left(\mathcal{P}(\mathbf{P}_u(\theta_i^1,\theta_i^2,\dots,\theta_i^u))+ud_{i}ud_{i-1}\right)\\ &+ \mathcal{P}(A_2\circledast \mathbf{P}_u(\theta_n^1,\theta_n^2,\dots,\theta_n^u)) + d_nud_{n-1}\\
				& = ul_1^1d_{0} + ul_1^1 + \bigg[\textstyle\sum\limits_{j=2}^{\mathcal{L}(\theta_1^1)-1}(ul_j^1ul_{j-1}^1 + ul_j^1) \bigg] + ud_1ul^1_{\mathcal{L}(\theta_1^1)-1} + ud_1\\ 
				&+ ud_1d_0 + \textstyle\sum\limits_{i=2}^{n-1} \left(\mathcal{P}(\mathbf{P}_u(\theta_i^1,\theta_i^2,\dots,\theta_i^u))+ud_{i}ud_{i-1}\right)\\
				&+ ul_1^nud_{n-1} + ul_1^n + \bigg[\textstyle\sum\limits_{j=2}^{\mathcal{L}(\theta_n^1)-1}(ul_j^nul_{j-1}^n + ul_j^n) \bigg]+ d_nul^n_{\mathcal{L}(\theta_n^1)-1} + d_n + d_nud_{n-1} \\
				&\leqslant u^2\mathcal{P}(\theta_1^1) + u^2d_1d_0 + \bigg[\textstyle\sum\limits_{i=2}^{n-1}(u^2\mathcal{P}(\theta_i^1) + u^2 d_id_{i-1}) \bigg] + u^2\mathcal{P}(\theta_n^1) + u^2d_n^1d_{n-1}\\
				&\leqslant u^2\bigg[\textstyle\sum\limits_{i=1}^n(\mathcal{P}(\theta_i^1) + d_i d_{i-1})\bigg] = u^2\ResPar(\Theta^1).
			\end{align*}
	\end{proof}

\section{ResNet approximations for Kolmogorov PDEs}\label{ResNetPDE}

	In this section, we present with the Proposition \ref{mainprop}, which is inspired by \cite[Proposition 6.1]{jentzen2018proof}, the main result of this article.   Using results from \cite{grohs2018proof,jentzen2018proof} we construct a perturbed Euler-Maruyama type approximation of a stochastic process which is linked to a solution of Kolmogorov PDE by the Feynman-Kac formula (see, e.g., \cite[Theorem 4.16 \& Corollary 4.17]{Hairer_2015} and \cite[Theorem 3.1]{jentzen2018proof}) and leads to a random field satisfying the desired weak error type approximation accuracy $\epsilon>0$. With the help of Lemma 2.1 in \cite{jentzen2018proof} we obtain the existence of a realization  of this constructed random field with the desired approximation accuracy. We then apply the results obtained in Section \ref{ResidualNetworks} above to construct a ResNet with realization equal to the realization of the random field obtained. In the last step we derive an upper bound on the complexity of this constructed ResNet which is polynomially growing in the dimension of the PDE and the reciprocal of the approximation accuracy. In Corollary \ref{maincor} we use H\"older's inequality to extend the result of Proposition~\ref{mainprop} from $p\in[2,\infty)$ to $p\in(0,\infty)$.
	\begin{setting}\label{setting2}
		Let $a\in C(\R,\R)$ be an activation function, let $T,\kappa\in(0,\infty)$ be a finite time horizon and a positive constant, respectively, for every $d \in \N$ let $A_d = (a_{d,i,j})_{i,j\in\{1,2,\dots,d\}} \in\R^{d\times d}$ symmetric and positive definite and let $\nu_d$ a probability measure on $\R^d$ w.r.t. the Borel $\sigma$-algebra. For every $d \in \N$,  the initial condition and drift coefficient, denoted  by $f_{0,d} \colon \R^d\to\R$ and $f_{1,d} \colon\R^d\to\R^d$,  satisfy for all $x,y\in\R^d$
		\begin{align*}
			\abs{f_{0,d}(x)} + \operatorname{Trace}(A_d) &\leqslant \kappa d^\kappa(1 + \norm{x}^\kappa),\\
			\norm{f_{1,d}(x) - f_{1,d}(y)} &\leqslant \kappa \norm{x-y}.
		\end{align*}
		Moreover, for every $\delta\in\left(0,1\right]$, $d\in \N$ let $\phi_\delta^{0,d},\phi_\delta^{1,d} \in \mathcal{N}$ be  FNNs with $\mathcal{R}_a\phi_\delta^{0,d} \in C(\R^d,\R)$, $\mathcal{R}_a\phi_\delta^{1,d} \in C(\R^d,\R^d)$ satisfying the growth conditions
		\begin{equation}\label{k.l}
		\begin{split}
			\abs{(\mathcal{R}_a\phi_\delta^{0,d})(x) - (\mathcal{R}_a\phi_\delta^{0,d})(y)} &\leqslant \kappa d^\kappa(1 + \norm{x}^\kappa + \norm{y}^\kappa)\norm{x-y},\\
			\norm{(\mathcal{R}_a\phi_\delta^{1,d})(x)} & \leqslant \kappa(d^\kappa+ \norm{x}),
			\end{split}
		\end{equation}
		and assume that the realizations of $(\phi_\delta^{0,d})_{\delta \in (0, 1]}$ and $(\phi_\delta^{1,d})_{\delta \in (0, 1]}$ approximate $f_{0,d}$ and $f_{1,d}$ in the sense that for all $\delta \in (0,1]$, $x\in\R^d$
		\begin{align}\label{existDNNs}
			\abs{f_{0,d}(x) - (\mathcal{R}_a\phi_\delta^{0,d})(x)} + \norm{f_{1,d}(x)- (\mathcal{R}_a\phi_\delta^{1,d})(x)} \leqslant \delta\kappa d^\kappa(1 + \norm{x}^\kappa)
		\end{align}
	(cf.\ Definitions \ref{def5.1} and \ref{def5.3}).
	\end{setting}

	We now state the main result of this article.
	
	\begin{proposition}\label{mainprop}
		Assume Setting \ref{setting2}, let $\eta\in[1,\infty )$, $p\in[2,\infty)$, and assume the families of FNNs $(\phi_\delta^{0,d})_{d \in \N, \delta \in (0, 1]} \subseteq \mathcal{N}$ and $(\phi_\delta^{1,d})_{d \in \N, \delta \in (0, 1]} \subseteq \mathcal{N}$ satisfy 
		\begin{align}\label{d.e}
		\mathcal{P}(\phi_\delta^{0,d}) + \mathcal{P}(\phi_\delta^{1,d}) \leqslant \kappa d^\kappa \delta^{-\kappa}, \qquad d \in \N, \delta \in (0, 1].
		\end{align}
		Moreover, for every $d \in \N$ assume $ \int_{\R^d}\norm{z}^{p(2\kappa + 1)}\nu_d(dz) \leqslant \eta d^\eta$ and  let $u_d \colon\left[0,T\right]\times \R^d \to \R$ be an at most polynomially growing viscosity solution of
		\begin{align}\label{KolgPDE2}
			(\partial_t u_d)(t,x) = \langle(\nabla_xu_d)(t,x),f_{1,d}(x)\rangle + \textstyle\sum\limits_{i,j=1}^d a_{d,i,j}(\partial_{x_i}\partial_{x_j}u_d)(t,x)
		\end{align}
		for all $(t,x) \in (0,T)\times \R^d$ with the initial condition $u_d(0,x) = f_{0,d}(x)$,  $x\in\R^d$. 
		Then there exist $c\in\R$ and a family of ResNets $(\psi_{d,\epsilon})_{d \in \N, \epsilon \in (0, 1]} \subseteq  \ResNet$ such that $\ResReal_a\psi_{d,\epsilon} \in C(\R^d,\R)$, $\ResPar(\psi_{d,\epsilon}) \leqslant cd^c\epsilon^{-c}$, and 
		\begin{align}\label{mainapprox}
			\left[\int_{\R^d}\abs{u_d(T,x) - (\ResReal_a\psi_{d,\epsilon})(x)}^p \, \nu_d(dx)\right]^{\frac{1}{p}} < \epsilon
		\end{align}
	for all $d \in \N$, $\epsilon \in (0, 1]$ (cf.~Definitions~\ref{DefResNet} and \ref{ReaResNet}).
	\end{proposition}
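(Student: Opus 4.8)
The plan is to follow the strategy of \cite[Proposition 6.1]{jentzen2018proof}, but to replace the explicit feedforward construction of one Euler--Maruyama step (as in \cite[Proposition 5.2]{jentzen2018proof}) by a single residual block. First I would apply the Feynman--Kac formula (cf.\ \cite[Theorem 3.1]{jentzen2018proof} and \cite{Hairer_2015}) to write $u_d(T,x) = \E[f_{0,d}(\mathcal{X}^{d,x}_T)]$, where $\mathcal{X}^{d,x}$ solves the SDE $d\mathcal{X}^{d,x}_t = f_{1,d}(\mathcal{X}^{d,x}_t)\,dt + \sqrt{2A_d}\,dW_t$ with $\mathcal{X}^{d,x}_0 = x$ and $\sqrt{2A_d}$ the unique symmetric positive definite square root of $2A_d$ (which exists since $A_d$ is symmetric positive definite). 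For $N\in\N$, time grid $t_k = kT/N$, and $M\in\N$ independent Brownian motions $W^{(1)},\dots,W^{(M)}$, I consider the \emph{perturbed} Euler--Maruyama scheme which uses the neural drift $\mathcal{R}_a\phi_\delta^{1,d}$ instead of $f_{1,d}$ at each step and evaluates $\mathcal{R}_a\phi_\delta^{0,d}$ instead of $f_{0,d}$ at time $T$, together with its Monte Carlo average over the $M$ copies. Decomposing the error into a bias (weak) error between $u_d(T,x)$ and the expectation of the perturbed terminal evaluation, and a statistical error between that expectation and its Monte Carlo average, the first is controlled by a weak-error estimate of the type \cite[Proposition 4.2]{jentzen2018proof}, which absorbs both the time discretization and the drift/terminal perturbations via \eqref{existDNNs} and the growth bounds \eqref{k.l}, and the second by a Monte Carlo / Marcinkiewicz--Zygmund estimate of the type \cite[Corollary 2.5]{grohs2018proof}; here the moment hypothesis $\int_{\R^d}\norm{z}^{p(2\kappa+1)}\,\nu_d(dz)\le\eta d^\eta$ and the polynomial growth bounds of \Cref{setting2} (through $\kappa$, $\eta$, $\operatorname{Trace}(A_d)$) are exactly what keeps every constant polynomial in $d$. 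Altogether this produces a random field whose $L^p(\nu_d)$-distance to $u_d(T,\cdot)$ is bounded, in $L^p$ with respect to the sampling, by $C d^{C}(N^{-\rho} + \delta + M^{-1/2})$ for constants $C,\rho>0$ depending only on $p,\kappa,\eta,T$.

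Next I would choose $N$ and $M$ polynomially in $d$ and $\epsilon^{-1}$, and $\delta$ as a suitable negative power of a polynomial in $d$ and $\epsilon^{-1}$, so that the above bound is $<\epsilon$; then \cite[Lemma 2.1]{jentzen2018proof} yields a fixed realization of the Brownian increments, i.e.\ fixed vectors $\xi_k^{(m)} := \sqrt{2A_d}\,(W^{(m)}_{t_k}-W^{(m)}_{t_{k-1}})\in\R^d$, for which the \emph{deterministic} function given by $Y^{(m)}_0(x) = x$, $Y^{(m)}_k(x) = Y^{(m)}_{k-1}(x) + \tfrac{T}{N}(\mathcal{R}_a\phi_\delta^{1,d})(Y^{(m)}_{k-1}(x)) + \xi_k^{(m)}$, and $g_{d,\epsilon}(x) := \tfrac{1}{M}\sum_{m=1}^M(\mathcal{R}_a\phi_\delta^{0,d})(Y^{(m)}_N(x))$ already satisfies $[\int_{\R^d}\abs{u_d(T,x)-g_{d,\epsilon}(x)}^p\,\nu_d(dx)]^{1/p}<\epsilon$.

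It then remains to realise $g_{d,\epsilon}$ as $\ResReal_a\psi_{d,\epsilon}$ for a ResNet of polynomial complexity. For each $m$ and $k$ I define a residual block $\theta_k^{(m)}\in\mathcal{N}$ by taking $\phi_\delta^{1,d}$ and replacing its last layer $(W_L,B_L)$ by $(\tfrac{T}{N}W_L,\tfrac{T}{N}B_L+\xi_k^{(m)})$ --- equivalently, a left matrix multiplication by $\tfrac{T}{N}\Id_d$ (\Cref{def5.9}) followed by an affine depth-one composition (\Cref{def5.5}), so that no identity-representing network is ever needed --- and I set the shortcut matrix $\Gamma_k^{(m)} := \Id_d$; by \eqref{h.i}, the ResNet with residual blocks $\theta_1^{(m)},\dots,\theta_N^{(m)}$ then has realization $x\mapsto Y^{(m)}_N(x)$. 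Composing this ResNet with the FNN $\phi_\delta^{0,d}$ via \Cref{defcompFFREs} and using \Cref{lemcompFFResnew} gives a ResNet $\Theta^{(m)}$ with $\ResReal_a\Theta^{(m)} = (\mathcal{R}_a\phi_\delta^{0,d})\circ(x\mapsto Y^{(m)}_N(x))$. Since all $\Theta^{(m)}$ are built from the same FNNs $\phi_\delta^{1,d},\phi_\delta^{0,d}$ and differ only in the biases $\xi_k^{(m)}$, their $i$-th residual blocks share the same architecture for every $i$, so \Cref{lemparallelResNets} with weights $h_m = 1/M$ produces a ResNet $\psi_{d,\epsilon}$ with $\ResReal_a\psi_{d,\epsilon} = g_{d,\epsilon}$ and
\[
  \ResPar(\psi_{d,\epsilon}) \;\le\; M^2\Big( N\big(\mathcal{P}(\phi_\delta^{1,d}) + d^2\big) + \mathcal{P}(\phi_\delta^{0,d}) + d\Big).
\]
Inserting \eqref{d.e} and the polynomial choices of $M$, $N$, $\delta$, the right-hand side is $\le cd^c\epsilon^{-c}$, which finishes the proof; the extension to $p\in(0,\infty)$ in \Cref{maincor} then follows from H\"older's inequality.

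The main obstacle is the quantitative error analysis of the first two paragraphs: one must combine the weak-error estimate, the drift- and initial-condition approximation errors \eqref{existDNNs}, and the Monte Carlo estimate while carefully tracking the dependence of every constant on $d$ (through $\kappa$, $\eta$, $\operatorname{Trace}(A_d)$, and the $\nu_d$-moments), and then balance $N$, $M$, and $\delta$ against $\epsilon$ so that both the accuracy bound \eqref{mainapprox} and the polynomial complexity bound hold simultaneously. By contrast, the ResNet construction itself is comparatively short, since one Euler--Maruyama step coincides with exactly one residual block (shortcut $=\Id_d$, block $=$ a scaled and shifted copy of $\phi_\delta^{1,d}$); in particular the auxiliary one-step networks and the artificial identity insertions of \cite{jentzen2018proof} are entirely avoided.
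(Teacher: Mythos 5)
Your proposal is correct and follows essentially the same route as the paper's proof: perturbed Euler--Maruyama with the neural drift, weak-error plus Monte Carlo estimates, a deterministic realization via \cite[Lemma 2.1]{jentzen2018proof}, residual blocks obtained by rescaling the last layer of $\phi_\delta^{1,d}$ and shifting its bias by the Brownian increment with identity shortcuts, composition with $\phi_\delta^{0,d}$ via \Cref{defcompFFREs}, and summation via \Cref{lemparallelResNets}; even your complexity bound $M^2(N(\mathcal{P}(\phi_\delta^{1,d})+d^2)+\mathcal{P}(\phi_\delta^{0,d})+d)$ matches \eqref{g.h} and \eqref{c.d}. The only cosmetic difference is that you keep the number of time steps $N$ and the perturbation parameter $\delta$ as separate quantities to be balanced, whereas the paper ties the step size to $\delta^2=\mathfrak{D}_{d,\epsilon}^2$.
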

	Note that the assumptions \eqref{d.e} \& \eqref{existDNNs} imply that the functions $(f_{0,d})_{d \in \N}$ and $(f_{1,d})_{d \in \N}$ can  be approximated with FNNs without suffering the CoD. This is crucial in constructing  ResNets not suffering the CoD in approximating the solution functions $(u_d)_{d \in \N}$.

	Our proof of \Cref{mainprop} is  similar to the proof of \cite[Proposition 6.1]{jentzen2018proof}. Since the first part of our proof only uses results on  Euler-Maruyama type approximations (cf.\ \cite[Section 4]{jentzen2018proof}) and no FNN or ResNet specific results, we refer at some points to \cite{jentzen2018proof} for further details.
	\begin{proof}
		Let $\iota  \colon= \max\{\kappa,1\}$ and for every $d \in \N$, $\delta \in (0, 1]$ let $\mathcal{A}_d  \colon= \sqrt{2A_d}\in \R^{d\times d}$ and  $\Phi_\delta^{0,d} \colon \R^d\to\R$, $\Phi_\delta^{1,d} \colon\R^d\to\R^d$ be given by 
		\begin{align*}
			\Phi_\delta^{0,d}(x) = (\mathcal{R}_a\phi_\delta^{0,d})(x), \qquad \Phi_\delta^{1,d}(x) = (\mathcal{R}_a\phi_\delta^{1,d})(x), \qquad x\in\R^d.
		\end{align*}
		Let $\left(\Omega, \mathcal{F}, \mathbb{P}\right)$ be a  probability space rich enough such that we can define countably many independent standard $\mathcal{F}$-adapted Brownian motions $W^{d,m} \colon\left[0,T\right]\times \Omega\to\R^d$ for $d \in \N$, $m\in\N_0$, and define $\varpi_{d,q}  \colon= \E[\norm{\mathcal{A}_dW_T^{d,0}}^q]^{\frac{1}{q}}\in \R$ for all $q\in(0,\infty)$. Furthermore, for every $d \in \N$,  $x\in\R^d$ let $X^{d,x} \colon[0,T]\times \Omega\to\R^d$ be the process which satisfies
		\begin{align}\label{stochproc}
			X_t^{d,x} = x + \int_0^t f_{1,d}(X_s^{d,x}) \, ds + \mathcal{A}_dW_t^{d,0}, \qquad t \in [0, T].
		\end{align}
		Note that, e.g., by   \cite[Theorem 3.1]{jentzen2018proof} we get that these processes are indeed unique and satisfy
		\begin{align}\label{Feynmansolrelation}
			u_d(T,x) = \E[f_{0,d}(X_T^{d,x})], \qquad  x \in \R^d, d \in \N.
		\end{align}
		The connection (\ref{Feynmansolrelation}) between the deterministic solutions of the PDEs (\ref{KolgPDE2}) and the stochastic processes (\ref{stochproc}) leads us to the following proof structure.
		\begin{enumerate}[label =\roman*)]
			\item\label{1} In the first step we construct a family of  perturbed Euler-Maruyama type approximations of the processes in (\ref{stochproc}). 
			\item\label{2}  Using the values of these perturbed Euler-Maruyama type approximations at the final time $T$ together with the relation (\ref{Feynmansolrelation}), we can apply the weak error  estimates presented in \cite[Proposition 4.2]{jentzen2018proof} and a Monte Carlo type estimate to construct a random field which approximates (in expectation) a solution of the consider Kolmogorov PDE with a precision of at most $\epsilon>0$.
			\item\label{3}  Lemma 2.1 in \cite{jentzen2018proof} then assures the existence of a realization of the constructed random field with this desired approximation accuracy.
			\item\label{4} Using results from Section \ref{ResidualNetworks} we show that there exists a ResNet with realization equal to the obtained realization of the constructed random field.
			\item\label{5} In a last step we show that the complexity of the constructed ResNet grows at most polynomially in the dimension $d\in\N$ and the reciprocal of the approximation accuracy $\epsilon>0$.
		\end{enumerate}

		\textbf{Step \ref{1}:} To define the perturbed Euler-Maruyama type approximation we first define time discretization  $\chi_\delta \colon \allowbreak [0,T] \allowbreak\to[0,T]$   by
		\begin{align*}
			\chi_\delta(t)  \colon= \delta^2\max \! \left\lbrace  \N_{0} \cap \left[0,\tfrac{t}{\delta^2}\right] \right\rbrace, \qquad t \in [0, T], \delta \in (0, 1],
		\end{align*}
		i.e., we use a stepsize of $\delta^2$ and $\frac{\chi_\delta(t)}{\delta^2}+1$ is the number of steps made at time $t\in\left[0,T\right]$. Then we define the perturbed Euler-Maruyama type approximation $Y^{\delta,d,m,x} \colon\left[0,T\right]\times\Omega\to\R^d$ iteratively by
		\begin{align}\label{a.b}
			Y_t^{\delta,d,m,x}  \colon= x + \int_0^t\Phi_\delta^{1,d}(Y_{\chi_\delta(s)}^{\delta,d,m,x}) \, ds + \mathcal{A}_dW_t^{d,m}
		\end{align}
		for all  $\delta \in (0, 1]$, $d,m\in\N, x\in\R^d,t\in\left[0,T\right]$ (cf.\ \cite[Section 4]{jentzen2018proof}). Note that we consider the Euler-Maruyama type approximation with the drift coefficient $\Phi_\delta^{1,d}$ instead of $f_{1,d}$ and call it perturbed Euler-Maruyama type approximation, because we approximate $f_{1,d}$  with the FNN realizations $(\Phi_\delta^{1,d})_{\delta \in (0, 1]}$ which may, in general, differ
		from $f_{1,d}$. However, the assumption (\ref{existDNNs}) ensures that the functions $(\Phi_\delta^{1,d})_{\delta \in (0, 1]}$ approximate $f_{1,d}$ well in the sense of (\ref{existDNNs}).

		\textbf{Step \ref{2}:}  \cite[Lemmas 4.1 \& 4.2]{jentzen2018proof}, (\ref{Feynmansolrelation}), the weak error estimates presented in \cite[Proposition 4.2]{jentzen2018proof}, and a Monte Carlo type estimate (cf.\ \cite[Corollary 2.5]{grohs2018proof}) yield for every $d \in \N$, $\varepsilon \in (0, 1]$ the existence of the random field 
		\begin{align}\label{randf}
		\frac{1}{\mathfrak{M}_{d,\epsilon}}\bigg[ \textstyle\sum\limits_{m = 1}^{\mathfrak{M}_{d,\epsilon}}\Phi_{\mathfrak{D}_{d,\epsilon}}^{0,d}\!\left( Y_T^{\mathfrak{D}_{d,\epsilon},d,m, \cdot} \right) \! \bigg] \colon \R^d \times \Omega \to \R
		\end{align} 
		satisfying
		\begin{align}\label{expapprox}
			\int_{\R^d}\E \bigg[ \bigg| u_d(T,x) - \frac{1}{\mathfrak{M}_{d,\epsilon}}\bigg[ \textstyle\sum\limits_{m = 1}^{\mathfrak{M}_{d,\epsilon}}\Phi_{\mathfrak{D}_{d,\epsilon}}^{0,d} \! \left( Y_T^{\mathfrak{D}_{d,\epsilon},d,m,x} \right)\! \bigg] \bigg|^p  \bigg] \, \nu_d(dx) < \epsilon^p,
		\end{align}
		(for more details see \cite[pages 1193-1198]{jentzen2018proof}). In this step we used $\mathfrak{M}_{d,\epsilon}$ independent perturbed Euler-Maruyama type approximations with step size $\mathfrak{D}_{d,\epsilon}^2$, where
		\begin{align*}
			\mathfrak{M}_{d,\epsilon}  \colon= \min\!\left\lbrace n\in\N \colon \left(\tfrac{2^{(\kappa+4)}p\kappa d^\kappa e^{\kappa^2T}}{\epsilon}\right)^2  \!\left[ 1 + \left( \kappa d^\kappa T + \sqrt{2(p\iota-1)\kappa d^\kappa T} \right)^{p\kappa} + \eta d^\eta \right]^{\frac{2}{p}} \leqslant n \right\rbrace 
		\end{align*}
		and
		\begin{align*}
		\mathfrak{D}_{d,\epsilon}  \colon= \epsilon\!\left[ \max\{2\kappa d^\kappa,1\} + T^{-\frac{1}{2}} \right]^{-1} e^{-(3 + 3\kappa + \left[\kappa^2 + 2\kappa\iota +  2\right]T)} \abs{ \max\left\lbrace 1,2\kappa(\kappa + 1)d^\kappa\right\rbrace  }^{-1}2^{-(2\iota+1)}\\
		\left[\abs{2 + \max\{1,\kappa d^\kappa,\norm{f_{1,d}(0)}\}\max\{1,T\} + \sqrt{2(2\iota - 1)\kappa d^\kappa T}}^{p\iota + p\kappa} + \eta d^\eta \right]^{-\frac{1}{p}}.
		\end{align*}
		Note that $\mathfrak{D}_{d,\epsilon}\leqslant 1$ for all $d\in\N$, $\epsilon\in\left(0,1\right]$.

		\textbf{Step \ref{3}:} Lemma 2.1 in \cite{jentzen2018proof} together with (\ref{expapprox}) assures that there exists (a not necessarily unique) $\mathfrak{w}_{d,\epsilon}\in \Omega$ such that the realization of the random field (\ref{randf}) satisfies
		\begin{align}\label{detapprox}
			\int_{\R^d}\bigg| u_d(T,x) - \frac{1}{\mathfrak{M}_{d,\epsilon}}\bigg[ \textstyle\sum\limits_{m = 1}^{\mathfrak{M}_{d,\epsilon}}\Phi_{\mathfrak{D}_{d,\epsilon}}^{0,d}\!\left( Y_T^{\mathfrak{D}_{d,\epsilon},d,m,x}(\mathfrak{w}_{d,\epsilon}) \right) \! \bigg] \bigg|^p \, \nu_d(dx) < \epsilon^p.
		\end{align}
		This gives us the desired approximation accuracy. It remains to show the existence of a ResNet $\Psi$ with realization equal to the realization of the random field in (\ref{detapprox}) and with at most polynomially growing complexity in the dimension $d\in\N$ and the reciprocal of the approximation accuracy $\epsilon>0$, i.e.,
		\begin{align}\label{parallelM}
		(	\ResReal_a\Psi)(x) = \frac{1}{\mathfrak{M}_{d,\epsilon}}\bigg[ \textstyle\sum\limits_{m = 1}^{\mathfrak{M}_{d,\epsilon}}\Phi_{\mathfrak{D}_{d,\epsilon}}^{0,d}\!\left( Y_T^{\mathfrak{D}_{d,\epsilon},d,m,x} (\mathfrak{w}_{d,\epsilon})\right) \! \bigg]
		\end{align}
		and $\ResPar(\Psi) \leqslant cd^c\epsilon^{-c}$ for some $c\in\R$ independent of $d\in \N$, $\varepsilon \in (0, 1]$.
		
		\textbf{Step \ref{4}:}  For a fixed $d  \in \N$, $\delta \in (0, 1]$ let $L\in\N$, $l_0,l_1,\dots,l_L$ such that the FNN $\phi_\delta^{1,d}\in\mathcal{N}$ approximating the drift function is given by
		\begin{align*}
			\phi_\delta^{1,d} = ((W_1,B_1),(W_2,B_2),\dots,(W_L,B_L))\in \left(\times_{k=1}^L(\R^{l_k\times l_{k-1}}\times \R^{l_k})\right).
		\end{align*}
		To define $\phi_{\delta,d,m,i,\omega}$ for all $i\in \{1,2,\dots,\frac{\chi_\delta(T)}{\delta^2} \}$, $m\in\N$, and $\omega\in\Omega$ we slightly modify $\phi_\delta^{1,d}$ by multiplying the last weight matrix $W_L$ and bias vector $B_L$ with the step size $\delta^2$ and add a Brownian motion term to the bias. This ensures that $\mathcal{R}_a(\phi_{\delta,d,m,i,\omega})$ performs the drift and diffusion of one step in the perturbed Euler-Maruyama type approximation, i.e., we define
		\begin{align}\label{e.f}
			\phi_{\delta,d,m,i,\omega}  \colon= &\left((W_1,B_1),(W_2,B_2),\dots,\left(\delta^2 W_L,\delta^2 B_L + \mathcal{A}_d \! \left[W_{\delta^2 i}^{d,m}(\omega) - W_{\delta^2 (i-1)}^{d,m}(\omega)\right]\right)\right)
			\end{align}
			 for  $i\in \{1,2,\dots,\frac{\chi_\delta(t)}{\delta^2} \}$ and for the last step 
			\begin{align}
			\phi_{\delta,d,m,\frac{\chi_\delta(T)}{\delta^2}+1,\omega}  \colon= &\Big((W_1,B_1),(W_2,B_2),\dots,\\
			&\left.\left((T-\chi_\delta(T)) W_L,(T-\chi_\delta(T)) B_L + \mathcal{A}_d \! \left[W_{T}^{d,m}(\omega) - W_{\chi_\delta(T)}^{d,m}(\omega) \right]\right)\right).\nonumber
		\end{align}
		Thus, 
		\begin{align}\label{resblocks}
		(	\mathcal{R}_a\phi_{\delta,d,m,i,\omega})(y)\nonumber &= \delta^2\Phi_\delta^{1,d}(y) + \mathcal{A}_d \! \left[W_{i\delta^2}^{d,m}(\omega) - W_{(i-1)\delta^2}^{d,m}(\omega) \right]\\ &= \int_{(i-1)\delta^2}^{i\delta^2}\Phi_\delta^{1,d}(y) \, ds + \mathcal{A}_d \! \left[W_{i\delta^2}^{d,m}(\omega) - W_{(i-1)\delta^2}^{d,m}(\omega)\right]
		\end{align}
		for all $y\in \R^{d}$, $i\in \{1,2,\dots,\frac{\chi_\delta(T)}{\delta^2} \}$ and
		\begin{align}\label{resblockend}
			\bigl(\mathcal{R}_a\phi_{\delta,d,m,\frac{\chi_\delta(T)}{\delta^2}+1,\omega} \bigr) (y)\nonumber &= (T-\chi(T))\Phi_\delta^{1,d}(y) + \mathcal{A}_d \! \left[W_{T}^{d,m}(\omega) - W_{\chi_\delta(T)}^{d,m}(\omega)\right]\\  &= \int_{\chi_\delta(T)}^{T}\Phi_\delta^{1,d}(y) \,  ds + \mathcal{A}_d \! \left[W_{T}^{d,m}(\omega) - W_{\chi_\delta(T)}^{d,m}(\omega)\right]
		\end{align}
		for all $y\in \R^{d}$. Note that this modification does not change the complexity, i.e., $\mathcal{P}(\phi_{\delta,d,m,i,\omega}) = \mathcal{P}(\phi_\delta^{1,d})$ for all $m\in\N$, $i\in \{1,2,\dots,\frac{\chi_\delta(T)}{\delta^2} \}$, and $\omega\in\Omega$.

		Further, note that $\mathcal{I}(\phi_{\delta,d,m,i+1,\omega}) = d = \mathcal{O}(\phi_{\delta,d,m,i,\omega})$ for $i \in \{ 1,2,\dots,\frac{\chi_\delta(T)}{\delta^2}\}$. This allows us to take all the shortcut matrices to be equal to the identity matrix. Using $\phi_{\delta,d,m,i,\omega}$ as residual blocks we define the ResNet $\psi_{\delta,d,m,T,\omega}\in\ResNet$ by
		\begin{align}\label{defEMrealization}
			\psi_{\delta,d,m,T,\omega}  \colon= \left(\Id_d, \phi_{\delta,d,m,1,\omega}, \Id_d, \phi_{\delta,d,m,2,\omega}, \dots, \Id_d, \phi_{\delta,d,m,\frac{\chi_\delta(T)}{\delta^2}+1,\omega}\right)
		\end{align}
		with the realization equal to the perturbed Euler-Maruyama type approximation
		\begin{align}\label{EMrealization}
		(	\ResReal_a\psi_{\delta,d,m,T,\omega})(x) = x + \int_0^T\Phi_\delta^{1,d}(Y_{\chi_\delta(s)}^{\delta,d,m,x}) \, ds + \mathcal{A}_dW_T^{d,m}  = Y_T^{\delta,d,m,x}(\omega)
		\end{align}
		(cf.\ Definition \ref{ReaResNet}, (\ref{a.b}),(\ref{resblocks}), and (\ref{resblockend})). The complexity of $\psi_{\delta,d,m,T,\omega}$ is then given by
		\begin{align}\label{b.c}
			\ResPar(\psi_{\delta,d,m,T,\omega}) = \textstyle\sum\limits_{i=1}^{\frac{\chi_\delta(T)}{\delta^2}+1}\left(\mathcal{P}(\phi_{\delta,d,m,i,\omega}) + d^2\right) = (\mathcal{P}(\phi_\delta^{1,d}) + d^2) \!\left[ \tfrac{\chi_\delta(T)}{\delta^2}+1 \right].
		\end{align}
		Composing the ResNet $\psi_{\delta,d,m,T, \omega}\in\ResNet$ with the FNN $\phi_\delta^{0,d}\in\mathcal{N}$ according to Definition \ref{defcompFFREs} we obtain, by Lemma \ref{lemcompFFResnew},  the ResNet
		\begin{align*}
			\varphi_{\delta,d,m,T,\omega} & \colon= \phi_\delta^{0,d} \diamond \psi_{\delta,d,m,T,\omega} \in\ResNet,\\
			(\ResReal_a\varphi_{\delta,d,m,T,\omega})(x) &= [( \mathcal{R}_a\phi_\delta^{0,d})\circ (\ResReal_a\psi_{\delta,d,m,T,\omega}) ](x) = \Phi_\delta^{0,d}(Y_T^{\delta,d,m,x}(\omega))
			\end{align*}
			for all $x\in\R^{d}$, $\omega\in\Omega$, with the length $(\frac{\chi_\delta(T)}{\delta^2} + 2)$ and  complexity
			\begin{align}\label{c.d}
				\begin{split}
			\ResPar(\varphi_{\delta,d,m,T,\omega}) &= \ResPar(\psi_{\delta,d,m,T,\omega}) + \mathcal{P}(\phi_\delta^{0,d}) + d \\
			&= (\mathcal{P}(\phi_\delta^{1,d}) + d^2) \!\left[ \tfrac{\chi_\delta(T)}{\delta^2}+1 \right] + \mathcal{P}(\phi_\delta^{0,d}) + d.
		\end{split}
		\end{align}
		To construct $\Psi$ as in (\ref{parallelM}), we apply Lemma \ref{lemparallelResNets} with $a\curvearrowleft a$, $u\curvearrowleft \mathfrak{M}_{d,\epsilon}$, $n\curvearrowleft (\frac{\chi_\delta(T)}{\delta^2} + 2)$, $h_1,h_2,\dots h_u\curvearrowleft \mathfrak{M}_{d,\epsilon}^{-1}$, and $\Theta^m\curvearrowleft \varphi_{\mathfrak{D}_{d,\epsilon},d,m,T,\mathfrak{w}_{d,\epsilon}}$ for ${1\leqslant m\leqslant\mathfrak{M}_{d,\epsilon}}$. This assures the existence of a ResNet $\Psi_{\epsilon,d,\mathfrak{w}_{d,\epsilon}}\in\ResNet$ such that
		\begin{align}\label{f.g}
		(	\ResReal_a\Psi_{\epsilon,d,\mathfrak{w}_{d,\epsilon}})(x) = \frac{1}{\mathfrak{M}_{d,\epsilon}}\bigg[ \textstyle\sum\limits_{m = 1}^{\mathfrak{M}_{d,\epsilon}}\Phi_{\mathfrak{D}_{d,\epsilon}}^{0,d}\!\left( Y_T^{\mathfrak{D}_{d,\epsilon},d,m,x}(\mathfrak{w}_{d,\epsilon}) \right)\! \bigg]
		\end{align}
		with the complexity estimate
	\begin{align}\label{g.h}
		\ResPar(\Psi_{\epsilon,d,\mathfrak{w}_{d,\epsilon}}) \leqslant& \mathfrak{M}_{d,\epsilon}^2\ResPar(\varphi_{\mathfrak{D}_{d,\epsilon},d,1,T,\mathfrak{w}_{d,\epsilon}}).
	\end{align}
		Combined with (\ref{detapprox}) this yields
		\begin{align}
			\label{eq:Psi:error}
			\left[\int_{\R^d}\abs{u_d(T,x) - (\ResReal_a\Psi_{\epsilon,d,\mathfrak{w}_{d,\epsilon}})(x)}^p \, \nu_d(dx)\right]^{\frac{1}{p}} < \epsilon.
		\end{align}

			\textbf{Step \ref{5}:} Using (\ref{g.h}), (\ref{c.d}), the fact that $\mathfrak{D}_{d,\epsilon}\leqslant 1$, the assumption (\ref{d.e}), the inequality $\kappa d^\kappa + d^2 + d \leqslant 3\iota d^{2\iota}$, and the bounds on $\mathfrak{M}_{d,\epsilon}$, $\mathfrak{D}_{d,\epsilon}$ derived in \cite[(6.33) \& (6.37)]{jentzen2018proof} which are given by
			\begin{align}
			\mathfrak{M}_{d,\epsilon} \leqslant 2^{2(\kappa + 4)}p^2\iota^2e^{2\kappa^2T}	\left[2 + \abs{2p\iota \max\{1,T\}}^{p\kappa} + \eta\right]d^{p\kappa \iota + \eta + 2\kappa}\epsilon^{-2}
			\end{align}
			and
			\begin{align}
			\begin{split}
			\mathfrak{D}_{d,\epsilon} \geqslant &\min\{1,\sqrt{T}\}e^{-\left(3\iota^2+3\right)\left(T+1\right)}\iota^{-3}2^{-\left(2\iota+5\right)}\\
			&\cdot \left(\left[6\iota\max\{1,T\}\right]^{p\iota + p\kappa} + \eta\right)^{-\frac{1}{p}} d^{-\left(2\kappa + \kappa(\kappa + \iota)  + \eta\right)}\epsilon,
			\end{split}
			\end{align}
			yields to the following upper bound on the complexity of $\Psi_{\epsilon,d,\mathfrak{w}_{d,\epsilon}}$ 
				\begin{equation}\label{maincompl}
			\begin{split}
			 \ResPar(\Psi_{\epsilon,d,\mathfrak{w}_{d,\epsilon}}) \leqslant  &\mathfrak{M}_{d,\epsilon}^2 \left(\left(\mathcal{P}(\phi_{\mathfrak{D}_{d,\epsilon}}^{1,d})+d^2\right) \!\left[ \tfrac{T}{\mathfrak{D}_{d,\epsilon}^2}+1 \right] + \mathcal{P}(\phi_{\mathfrak{D}_{d,\epsilon}}^{0,d}) + d\right)\\
			\leqslant& 
			\mathfrak{M}_{d,\epsilon}^2 \left(\mathcal{P}(\phi_{\mathfrak{D}_{d,\epsilon}}^{1,d}) + \mathcal{P}(\phi_{\mathfrak{D}_{d,\epsilon}}^{0,d}) + d^2 + d\right) \!\left[ \tfrac{T}{\mathfrak{D}_{d,\epsilon}^2}+1 \right]\\
			\leqslant& \mathfrak{M}_{d,\epsilon}^2 \left(\kappa d^\kappa + d^2+d\right) \mathfrak{D}_{d,\epsilon}^{-\kappa}\left[ \tfrac{T}{\mathfrak{D}_{d,\epsilon}^2}+1 \right]\\
			\leqslant& 
			\mathfrak{M}_{d,\epsilon}^2 3\iota d^{2\iota} \left[ T+1 \right] \mathfrak{D}_{d,\epsilon}^{-\kappa-2}\\
			\leqslant&
			\left( 2^{2(\kappa + 4)}p^2\iota^2e^{2\kappa^2 T}\left[2 + \abs{2p\iota \max\{1,T\}}^{p\kappa} + \eta\right] d^{p\kappa\iota + \eta + 2\kappa}\epsilon^{-2} \right)^2\\ 
			&\cdot 3\iota d^{2\iota} \left[T + 1\right] \left[ \min\{1,\sqrt{T} \}e^{-(3\iota^2 + 3)(T + 1)} \iota^{-3} 2^{-(2\iota + 5)}\right.\\
			&\left. \cdot \left(\left[6\iota \max\{1,T\}\right]^{p\iota + p\kappa} + \eta\right)^{-\frac{1}{p}} d^{-(\kappa(2 + \kappa + \iota) + \eta)} \epsilon \right]^{-\kappa-2}\\
			=& 
			\left( 2^{2(\kappa + 4)}p^2\iota^2e^{2\kappa^2 T}\left[2 + \abs{2p\iota \max\{1,T\}}^{p\kappa} + \eta\right]\right)^2 3\iota \left[T + 1\right]\\
			&\left[ \min\{1,\sqrt{T}\}e^{-(3\iota^2 + 3)(T + 1)} \iota^{-3} 2^{-(2\iota + 5)} \cdot \left(\left[6\iota \max\{1,T\}\right]^{p\iota + p\kappa} + \eta\right)^{-\frac{1}{p}} \right]^{-\kappa-2}\\ 
			&\cdot d^{2(p\kappa\iota + \eta + 2\kappa + \iota) + (\kappa(2+\kappa + \iota) + \eta)(\kappa + 2)}\epsilon^{-\kappa -6}.
			\end{split}
		\end{equation}
		This together with \eqref{eq:Psi:error} concludes the proof of  \Cref{mainprop}.
	\end{proof}

	\begin{corollary}\label{maincor}
		Assume Setting \ref{setting2}, let $\eta\in[1,\infty )$, $p\in(0,\infty)$, assume the families of FNNs $(\phi_\delta^{0,d})_{d \in \N, \delta \in (0, 1]} \subseteq \mathcal{N}$ and $(\phi_\delta^{1,d})_{d \in \N, \delta \in (0, 1]} \subseteq \mathcal{N}$ satisfy 
		\begin{align}\label{d.ecor}
		\mathcal{P}(\phi_\delta^{0,d}) + \mathcal{P}(\phi_\delta^{1,d}) \leqslant \kappa d^\kappa \delta^{-\kappa}, \qquad d \in \N, \delta \in (0, 1].
		\end{align}
		Moreover, for every $d \in \N$ assume $ \int_{\R^d}\norm{z}^{\max\{2,p\}(2\kappa + 1)}\nu_d(dz) \leqslant \eta d^\eta$ and let $u_d \colon [0,T]\times \R^d \to \R$ be an at most polynomially growing viscosity solution of
		\begin{align}\label{KolgPDE2cor}
		(\partial_t u_d)(t,x) = \langle(\nabla_xu_d)(t,x),f_{1,d}(x)\rangle + \textstyle\sum\limits_{i,j=1}^d a_{d,i,j}(\partial_{x_i}\partial_{x_j}u_d)(t,x)
		\end{align}
		for all $(t,x) \in (0,T)\times \R^d$ with initial condition $u_d(0,x) = f_{0,d}(x)$, $x\in\R^d$.
	Then there exist $c\in\R$ and a family of ResNets $(\psi_{d,\epsilon})_{d \in \N, \epsilon \in (0, 1]} \subseteq  \ResNet$ such that $\ResReal_a\psi_{d,\epsilon} \in C(\R^d,\R)$, $\ResPar(\psi_{d,\epsilon}) \leqslant cd^c\epsilon^{-c}$, and 
		\begin{align}\label{mainapproxcor}
		\left[\int_{\R^d}\abs{u_d(T,x) - (\ResReal_a\psi_{d,\epsilon})(x)}^p \, \nu_d(dx)\right]^{\frac{1}{p}} < \epsilon
		\end{align}
	for all $d \in \N$, $\epsilon \in (0, 1]$ (cf.~Definitions~\ref{DefResNet} and \ref{ReaResNet}).
		\end{corollary}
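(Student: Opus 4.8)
The plan is to deduce the corollary from \Cref{mainprop} by distinguishing the two cases $p\in[2,\infty)$ and $p\in(0,2)$. In the first case there is essentially nothing to prove: when $p\geqslant 2$ we have $\max\{2,p\}=p$, so the moment assumption $\int_{\R^d}\norm{z}^{\max\{2,p\}(2\kappa+1)}\nu_d(dz)\leqslant\eta d^\eta$ coincides with the hypothesis $\int_{\R^d}\norm{z}^{p(2\kappa+1)}\nu_d(dz)\leqslant\eta d^\eta$ of \Cref{mainprop}, while \eqref{d.ecor} and \eqref{KolgPDE2cor} are literally \eqref{d.e} and \eqref{KolgPDE2}. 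Hence \Cref{mainprop} applied verbatim yields a constant $c\in\R$ and a family of ResNets $(\psi_{d,\epsilon})_{d\in\N,\epsilon\in(0,1]}\subseteq\ResNet$ with $\ResReal_a\psi_{d,\epsilon}\in C(\R^d,\R)$, $\ResPar(\psi_{d,\epsilon})\leqslant cd^c\epsilon^{-c}$, and \eqref{mainapproxcor} (which is just \eqref{mainapprox}).

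For the case $p\in(0,2)$ I would instead invoke \Cref{mainprop} with the exponent $2$ in place of $p$. Now $\max\{2,p\}=2$, so the corollary's moment assumption reads $\int_{\R^d}\norm{z}^{2(2\kappa+1)}\nu_d(dz)\leqslant\eta d^\eta$, which is precisely what is required to apply \Cref{mainprop} with $p\curvearrowleft 2$. This provides $c\in\R$ and a family $(\psi_{d,\epsilon})_{d\in\N,\epsilon\in(0,1]}\subseteq\ResNet$ with $\ResReal_a\psi_{d,\epsilon}\in C(\R^d,\R)$, $\ResPar(\psi_{d,\epsilon})\leqslant cd^c\epsilon^{-c}$, and
\[
\left[\int_{\R^d}\abs{u_d(T,x)-(\ResReal_a\psi_{d,\epsilon})(x)}^{2}\,\nu_d(dx)\right]^{\frac{1}{2}}<\epsilon
\]
for all $d\in\N$, $\epsilon\in(0,1]$.

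To finish, I would use that $\nu_d$ is a probability measure (cf.\ \Cref{setting2}) together with H\"older's inequality (equivalently, Jensen's inequality applied to the convex map $t\mapsto t^{2/p}$, since $p/2<1$): for every measurable $g\colon\R^d\to[0,\infty)$ one has $\int_{\R^d}g^{p}\,\nu_d(dx)\leqslant\left(\int_{\R^d}g^{2}\,\nu_d(dx)\right)^{p/2}$, i.e.\ $\norm{g}_{L^{p}(\nu_d)}\leqslant\norm{g}_{L^{2}(\nu_d)}$. Applying this with $g=\abs{u_d(T,\cdot)-\ResReal_a\psi_{d,\epsilon}}$ upgrades the displayed $L^2$ bound to \eqref{mainapproxcor}, while the complexity estimate $\ResPar(\psi_{d,\epsilon})\leqslant cd^c\epsilon^{-c}$ is untouched. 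I do not anticipate a real obstacle: the only subtlety is bookkeeping of the moment exponent, and the hypothesis of the corollary is deliberately phrased with $\max\{2,p\}(2\kappa+1)$ rather than $p(2\kappa+1)$ precisely so that both cases feed correctly into \Cref{mainprop}.
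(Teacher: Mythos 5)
Your proposal is correct and follows essentially the same route as the paper's proof: the case $p\in[2,\infty)$ is a direct application of \Cref{mainprop}, and for $p\in(0,2)$ one applies \Cref{mainprop} with $p\curvearrowleft 2$ (using that $\max\{2,p\}=2$ makes the moment hypothesis match) and then uses H\"older's inequality on the probability measure $\nu_d$ to pass from the $L^2$ bound to the $L^p$ bound. No gaps.
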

		\begin{proof}
			 If $p\in [2,\infty)$, the statement follows directly from  \Cref{mainprop}. If $p\in(0,2)$, we have $\forall \, d \in \N \colon \int_{\R^d}\norm{z}^{2(2\kappa + 1)}\nu_d(dz) \leqslant \eta d^\eta$ and, hence, get by  \Cref{mainprop} (with $p\curvearrowleft 2$ in the notation of  \Cref{mainprop}) the existence
			  of $c\in\R$ and a family of ResNets $(\psi_{d,\epsilon})_{d \in \N, \epsilon \in (0, 1]} \subseteq  \ResNet$ such that $\ResReal_a\psi_{d,\epsilon} \in C(\R^d,\R)$, $\ResPar(\psi_{d,\epsilon}) \leqslant cd^c\epsilon^{-c}$, and 
	\begin{align*}
	\left[\int_{\R^d}\abs{u_d(T,x) - (\ResReal_a\psi_{d,\epsilon})(x)}^2 \, \nu_d(dx)\right]^{\frac{1}{2}} < \epsilon
\end{align*}
			 for all $d \in \N$, $\epsilon \in (0, 1]$.
 Applying H\"older's inequality for $p\in\left(0,2\right)$ therefore yields
			\begin{align*}
				\left[\int_{\R^d}\abs{u_d(T,x) - (\ResReal_a\psi_{d,\epsilon})(x)}^p \,  \nu_d(dx)\right]^{\frac{1}{p}} \leqslant \left[\int_{\R^d}\abs{u_d(T,x) - (\ResReal_a\psi_{d,\epsilon})(x)}^2 \, \nu_d(dx)\right]^{\frac{1}{2}} < \epsilon,
			\end{align*}
			what concludes the proof of Corollary \ref{maincor}.
			\end{proof}

\section{Conclusion}\label{conc}

	We proved the existence of ResNets which are able to approximate solutions of Kolmogorov PDEs with constant diffusion  and possibly nonlinear drift coefficients without suffering the CoD. Employing results  of \cite{jentzen2018proof}, in Proposition \ref{mainprop} above (and its slight extension - Corollary \ref{maincor}) we establish  upper bounds on the complexity of suitable ResNets which grows polynomially in the dimension of the PDEs and the reciprocal of the approximation accuracy.

	One of the tools we use to prove \Cref{mainprop} is the Feynman-Kac formula, that we apply to obtain  stochastic processes which are linked to solutions of the considered Kolmogorov PDEs and  which we approximate with perturbed Euler-Maruyama type approximations. Employing these approximations we construct a random field which gives us   the desired approximation accuracy. We establish this accuracy via 
 weak error   and  Monte Carlo type estimates. Lemma 2.1 in \cite{jentzen2018proof} then ensures the existence of a realization of this random field with the same desired approximation accuracy. Finally, we  construct a ResNet with the realization equal to this realization of the  random field and which has  a polynomially growing complexity.

	In contrast to \cite{jentzen2018proof}, we use ResNets  instead of standard FNNs for approximating solutions of Kolmogorov PDEs. Since the architecture of ResNets  suits the Euler-Maruyama type approximation, there is no need for a construction of one Euler-Maruyama type approximation step as in \cite[Proposition 5.2]{jentzen2018proof} and therefore, we do not need to assume that the identity map can be described by an FNN. 	This, in turn,   enlarges the set of applicable activation functions.  Thus, the only assumption involving the activation function is the existence of FNNs approximating the drift  and the initial conditions  which do not suffer  the CoD (cf.\ (\ref{existDNNs})) and satisfy the growth conditions in (\ref{k.l}).

	Similarly to FNNs, there are  different definitions of ResNets, see e.g., \cite{avelin2021neural,e2019priori,he2016deep,he2016identity,muller2020space,petersen2018optimal,zhang2017residual}. We work with a definition of ResNets inspired by \cite{he2016identity,muller2020space} (see also \cite{avelin2021neural} for discussion), which allows us to process the signal in the next residual block without applying the activation function before. 
This and  our definition of the complexity measure enables us to construct the composition of two ResNets with the complexity being equal to the sum of the complexity of the two ResNets  (cf.\ \Cref{lemcomp} above), which is an important ingredient in obtaining polynomial complexity bounds. Using other definitions (see, e.g., \cite{he2016deep,zhang2017residual}) might lead to not being able to directly obtain these kind of polynomial complexity bounds. This problem could be resolved, for example, by plugging in the artificial identity as a residual block analogously to \cite{jentzen2018proof}. However, it would require the existence of an FNN  representing the identity map, which  significantly shrinks the set of applicable activation functions.

	As a further step, one could investigate space-time approximations of Kolmogorov PDEs using ResNets. We note that there are various articles on space-time approximations of differential equations based on FNNs or ResNets, see e.g., \cite{hornung2020spacetime,muller2020space,grohs2019spacetime}. Other extensions of the presented theory would be an analysis of recurrent and recursive ResNets as in \cite{qin2019data} in approximating solutions of PDEs.
	Some further directions  could be a search for  optimal  complexity bounds and a consideration of   complexity measures which incorporate rank constraints of the weight matrices of neural networks. 

\subsection*{Acknowledgments}

DS has been funded in part by  ETH Foundations of Data Science (ETH-FDS). 

\pagebreak

\bibliographystyle{acm}
\phantomsection
\addcontentsline{toc}{section}{References}
\bibliography{refs}

\pagebreak

\appendix
\phantomsection
\addcontentsline{toc}{section}{Appendix}
\addtocontents{toc}{\protect\setcounter{tocdepth}{0}}

\end{document}